\newtheorem{theorem}{Theorem}[section]
\newtheorem{lemma}[theorem]{Lemma}
\newtheorem{corollary}[theorem]{Corollary}
\newtheorem{proposition}[theorem]{Proposition}
\theoremstyle{definition}
\newtheorem{definition}[theorem]{Definition}
\newtheorem{remark}[theorem]{Remark}
\newcommand{\B}{\mathbb{B}}
\newcommand{\C}{\mathbb{C}}
\newcommand{\R}{\mathbb{R}}
\newcommand{\Aut}{\mathop{{\rm Aut}}}
\numberwithin{equation}{section}
\begin{document}
\title{Carleman approximation by holomorphic automorphisms of $\mathbb C^n$}
\author{Frank Kutzschebauch}
\address{Frank Kutzschebauch: Mathematisches Institut, Universit\"{a}t Bern, Sidlerstr. 5, CH-3012, Bern, Switzerland}
\email{Frank.Kutzschebauch@math.unibe.ch}
\author{Erlend Forn\ae ss Wold}
\address{Erlend Forn\ae ss Wold: Matematisk Institutt, Universitetet i Oslo, Postboks 1053 Blindern, 0316 Oslo}
\email{erlendfw@math.uio.no}

%
%
\subjclass[2010]{32E30, 32H02, 32A15}
\date{\today}
\keywords{Holomorphic approximation, Automorphisms of $\mathbb C^n$, Carleman approximation, polynomial convexity}

\thanks{The research of the first author was partially supported by Schweizerischer Nationalfonds grant No 20021-140235/1}
\thanks{The research of the second author was partially supported by grant NFR-209751/F20 from the Norwegian Research Council}
\begin{abstract}
We approximate smooth maps defined on non-compact totally real manifolds by 
holomorphic automorphisms of $\mathbb C^n$. 
\end{abstract}

\maketitle

\section{Introduction}

The aim of the present paper is to prove a version of the Anders\'en-Lempert Theorem with control on \emph{non-compact} totally  real submanifolds of $\C^n$.  We use coordinates $z_j=x_{2j-1}+ix_{2j}$ on $\mathbb C^n$, and by $\mathbb R^s\subset\mathbb C^n$
we mean $\{z\in\mathbb C^n;x_{2j-1}=0 \mbox{ for } j>s, x_{2j}=0 \mbox{ for } j\geq 1\}$.  
The following is our main result (see also Theorem \ref{FRCarleman} for a more general statement).

\begin{theorem}\label{main}
Let $K\subset\mathbb C^n$ be a compact set, let $\Omega$ be an open set containing $K$, and let 
$$
\phi:[0,1]\times (\Omega\cup\mathbb R^s)\rightarrow\mathbb C^n, s<n,
$$
be an isotopy of smooth embeddings, $\phi_0=\phi(0,\cdot)=id$, such that the following hold:
\begin{itemize}
\item[1)] $\phi_t|_\Omega$ is holomorphic for all $t$,
\item[2)] $\phi_t(K\cup\mathbb R^s)$ is polynomially convex for all $t$, and 
\item[3)] there is some compact set $C\subset\mathbb R^s$ such that 
$\phi_t|_{\mathbb R^s\setminus C}=id$ for all $t$.
\end{itemize}
Then for any $k\in\mathbb N$  we have that $\phi_1$ is $\mathcal C^k$-approximable on  $K\cup\mathbb R^s$,
in the sense of Carleman, by holomorphic automorphisms of $\mathbb C^n$.
\end{theorem}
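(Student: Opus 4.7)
The plan is to build the Carleman approximation $\Phi$ as an infinite composition $\Phi=\lim_{N\to\infty}\alpha_N\circ\cdots\circ\alpha_1$ of holomorphic automorphisms, each close to the identity on a growing compact in $\C^n$. Fix exhaustions $L_1\subset L_2\subset\cdots$ of $\R^s$ and $D_1\subset D_2\subset\cdots$ of $\C^n$ by compacts (for concreteness $L_j := \R^s\cap\overline{B(0,j)}$ and $D_j := \overline{B(0,j)}$, re-indexed so that $C\cup K\subset D_1$), arranged so that $D_{j-1}\cap L_j = L_{j-1}$. Given the prescribed continuous tolerance $\e\colon K\cup\R^s\to(0,\infty)$, inductively construct $\Psi_j := \alpha_j\circ\Psi_{j-1}$ starting from $\Psi_0 = \mathrm{id}$, so that
(i) $\|\Psi_j-\phi_1\|_{\cC^k(K\cup L_j)} < 2^{-j}\inf_{L_j}\e$, and
(ii) $\|\alpha_j-\mathrm{id}\|_{\cC^k(D_{j-1})} + \|\alpha_j^{-1}-\mathrm{id}\|_{\cC^k(D_{j-1})} < 2^{-j}$.
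Condition (ii) is the standard Anders\'en--Lempert recipe guaranteeing convergence of $\Psi_N$ on compacta of $\C^n$ to an automorphism $\Phi$; condition (i) combined with the triangle inequality then yields the desired $\cC^k$-Carleman estimate for $\Phi$ approximating $\phi_1$ on $K\cup\R^s$.

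The inductive step is the heart of the argument. Given $\Psi_{j-1}$, seek $\alpha_j$ that is approximately the identity on $\Psi_{j-1}(D_{j-1})$ and approximately $\phi_1\circ\Psi_{j-1}^{-1}$ on $\Psi_{j-1}(K\cup L_j)$. On the overlap $\Psi_{j-1}(D_{j-1})\cap\Psi_{j-1}(K\cup L_j) = \Psi_{j-1}(K\cup L_{j-1})$ these two prescriptions agree up to the inductive error from step $j-1$: points of $L_{j-1}\setminus C$ are fixed by $\phi_1$, and $\Psi_{j-1}\approx\phi_1$ on $K\cup L_{j-1}$ by the previous step. After smoothing the small discrepancy, the prescriptions glue to a target map $\sigma$ defined on the compact $Y_j := \Psi_{j-1}(D_{j-1}\cup L_j)$. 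Interpolate $\sigma$ to an isotopy $\sigma_t$ from the identity, with time slices preserving polynomial convexity and admitting holomorphic extensions to a neighborhood of $Y_j$ (the totally real overhang $\Psi_{j-1}(L_j\setminus L_{j-1})$ being handled via standard totally real extension). Applying the compact Anders\'en--Lempert theorem to $\sigma_t$ on $Y_j$ produces the desired $\alpha_j\in\Aut(\C^n)$.

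The main technical obstacles are polynomial convexity and smooth-to-holomorphic gluing throughout. One must verify: (a) each $\phi_t(K\cup L_j)$ is polynomially convex, which follows from the hypothesis that $\phi_t(K\cup\R^s)$ is polynomially convex together with $\phi_t = \mathrm{id}$ off $C\subset L_j$, via a standard truncation along the totally real direction; (b) each $Y_j$ is polynomially convex, equivalently $D_{j-1}\cup L_j$ is (a closed ball together with a slightly larger totally real compact attached along a common totally real disk), for instance by Kallin's lemma using a separating polynomial in the directions transverse to $\R^s$; and (c) each $\sigma_t$ extends holomorphically to a neighborhood of $Y_j$, supplied by a standard totally real approximation theorem. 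Once (a)--(c) are established, the tolerance bookkeeping ensuring that (i) sharpens while (ii) is preserved is routine, and the infinite composition converges to the required automorphism $\Phi$.
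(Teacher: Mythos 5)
Your proposal takes a genuinely different route from the paper, so let me first note the contrast: the paper deduces Theorem \ref{main} as a special case of Theorem \ref{FRCarleman}, whose proof decomposes $\phi_1$ into shears and over-shears along nearly-parallel directions (Lemma \ref{andersen}), truncates them by cutoff functions supported near a fixed annulus, and then approximates each truncated (over-)shear \emph{globally} via Carleman approximation of entire functions of $n-1$ variables on the projected image of $K\cup\mathbb R^s$. Your proposal instead tries to build $\Phi$ as an infinite composition, invoking only the \emph{compact} Anders\'en--Lempert theorem at each stage.

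The gap is in the inductive step, and it is structural rather than a matter of bookkeeping. The compact Anders\'en--Lempert theorem yields an $\alpha_j\in\Aut_{\mathrm{hol}}\C^n$ approximating the glued target $\sigma$ on the compact $Y_j=\Psi_{j-1}(D_{j-1}\cup L_j)$, but it gives \emph{no} control whatsoever on $\alpha_j$ outside a neighbourhood of $Y_j$. Consequently $\Psi_j(L_{j+1}\setminus L_j)=\alpha_j(\Psi_{j-1}(L_{j+1}\setminus L_j))$ is completely uncontrolled: it need not be close to $L_{j+1}\setminus L_j$, need not stay disjoint from $\Psi_j(D_j)$ in any tame way, and need not admit an isotopy to the identity through polynomially convex slices. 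This breaks the very existence of a well-behaved target isotopy $\sigma_t$ and the polynomial-convexity input at stage $j+1$; your item (b), while literally true (since $\Psi_{j-1}$ is an automorphism), is beside the point once the overhanging arc $\Psi_{j-1}(L_j\setminus L_{j-1})$ has wandered off. To restore control one would have to require $\alpha_j$ to be close to the identity (or to the intended target) on \emph{all} of $\Psi_{j-1}(\mathbb R^s)$, not merely on a compact --- but that is exactly Carleman approximation on a non-compact totally real manifold, i.e.\ the statement being proved, so the argument becomes circular. This is precisely why the paper's proof of Theorem \ref{FRCarleman} does not iterate the compact theorem; the iterative scheme you have in mind does appear in the paper, but only in Section 7 (the proof of Theorem \ref{approximationaut}), where Lemma \ref{mainstepaut} crucially uses the already-established Theorem \ref{FRCarleman} to guarantee that the output automorphism keeps $\Psi_j(\mathbb R^s)$ a $\mathcal C^k$-small perturbation of $\mathbb R^s$ at \emph{every} point. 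A secondary, fixable issue: your condition (ii) with the fixed bound $2^{-j}$ does not yield the Carleman estimate when $\epsilon(x)\to 0$ faster than $2^{-j}$ along $L_j\setminus L_{j-1}$; the $\alpha_j$-bound on $D_{j-1}$ must be taken $\epsilon$-dependent, e.g.\ $\lesssim 2^{-j}\inf_{L_j}\epsilon$.
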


In Proposition \ref{fixingisotopy} we give a result to the effect that if we assume that $\phi_t(K)$
is polynomially convex for all $t$, then there are arbitrarily small perturbations 
of $\phi_t$ achieving 2).

As a first application of this result we also prove the following:

\begin{theorem}
 Let $\phi:\mathbb R^s\rightarrow \mathbb R^s$ be a smooth 
automorphism, and assume that $s<n$.
Then $\phi$ can be approximated in the fine Whitney topology by holomorphic automorphisms of $\mathbb C^n$.
\end{theorem}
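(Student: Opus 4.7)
The plan is to reduce the statement to Theorem \ref{main} by an Anders\'en--Lempert type iteration, since Theorem \ref{main} as stated requires the isotopy to be the identity outside a compact set, while the given $\phi$ typically is not. First, if $\phi$ reverses orientation I compose it with the holomorphic automorphism $(z_1,\ldots,z_n)\mapsto(-z_1,z_2,\ldots,z_n)$ of $\C^n$, so I may assume $\phi$ preserves orientation. Fix $k\in\N$ and a positive continuous error function $\epsilon:\R^s\to(0,\infty)$, and choose a compact exhaustion $B_1\Subset B_2\Subset\cdots$ of $\R^s$ by closed balls together with an exhaustion $L_1\Subset L_2\Subset\cdots$ of $\C^n$ by compacta. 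I build inductively holomorphic automorphisms $F_0=\mathrm{id},F_1,F_2,\ldots$ of $\C^n$ such that $F_j$ lies within a prescribed small $C^k$-tolerance of $\phi$ on $B_j$ and of $F_{j-1}$ on $L_{j-1}$; with summable tolerances the $F_j$ converge on compacta of $\C^n$ to a holomorphic automorphism $F$ by the standard Anders\'en--Lempert convergence, and a telescoping estimate yields the desired fine Whitney bound of $\phi$ by $F$ on $\R^s$.

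The inductive step writes $F_j=F_{j-1}\circ H_j$, where $H_j$ is a holomorphic automorphism of $\C^n$ furnished by Theorem \ref{main} so as to be $C^k$-close to the identity on $K_j:=\overline{F_{j-1}^{-1}(L_{j-1})}$ and $C^k$-close to $F_{j-1}^{-1}\circ\phi$ on $B_j$. Since $s<n$, I may enlarge $K_j$ slightly so that an open neighborhood $\Omega_j$ of $K_j$ is disjoint from $\R^s$. The target smooth isotopy $\Phi_t:\Omega_j\cup\R^s\to\C^n$ is taken to be the identity on $\Omega_j$ for every $t$ (so $\Phi_t|_{\Omega_j}$ is holomorphic), to equal the identity on $\R^s$ off some compact set, and to end with $F_{j-1}^{-1}\circ\phi$ on $B_j$; the extension of $F_{j-1}^{-1}\circ\phi|_{B_j}$ to a smooth embedding of $\R^s$ into $\C^n$ equalling the identity off a larger ball, together with the corresponding isotopy from the identity, is built using the annulus theorem, together with the fact that by induction $F_{j-1}^{-1}\circ\phi$ is already close to the identity on $B_{j-1}$.

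The main obstacle is condition 2) of Theorem \ref{main}, namely that $\Phi_t(K_j\cup\R^s)$ be polynomially convex for every $t\in[0,1]$. Because $K_j$ lies at positive distance from $\R^s$ and the image $\Phi_t(\R^s)$ is a compactly supported totally real perturbation of $\R^s$, any failure of polynomial convexity is localized to a compact region, and Proposition \ref{fixingisotopy} then allows me to perturb the isotopy by an arbitrarily small amount to restore polynomial convexity for every $t$. Theorem \ref{main} then produces $H_j$ with the required approximation properties, completing the induction and hence the proof.
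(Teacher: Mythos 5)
Your overall strategy — an inductive composition $F_j=F_{j-1}\circ H_j$ where each $H_j$ is close to the identity on a growing compact set and $C^k$-close to the remaining error on a growing ball in $\R^s$, with summable tolerances so the sequence converges to an automorphism in the standard way — is exactly the structure of the paper's proof. However, several of the steps you treat as routine are where the paper's actual work lies, and one specific claim is false.

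The concrete error is the assertion that ``since $s<n$, I may enlarge $K_j$ slightly so that an open neighborhood $\Omega_j$ of $K_j$ is disjoint from $\R^s$.'' The compact set $K_j=\overline{F_{j-1}^{-1}(L_{j-1})}$ is full-dimensional and will in general meet $\R^s$; no amount of enlarging (or shrinking) it can make a neighborhood disjoint from $\R^s$ once $K_j\cap\R^s\neq\emptyset$. This is not cosmetic: your isotopy $\Phi_t$ is required to be the identity on $\Omega_j$ and equal to $F_{j-1}^{-1}\circ\phi$ on $B_j$, and if $\Omega_j\cap B_j\cap\R^s\neq\emptyset$ these prescriptions conflict unless $F_{j-1}^{-1}\circ\phi$ is \emph{exactly} the identity there, which your induction does not arrange. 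The paper resolves precisely this tension by maintaining as an inductive invariant (condition $(5_i)$ in the proof of Theorem \ref{approximationaut}, enforced via Lemma \ref{glue}) that the remaining-error diffeomorphism $\phi_i$ is identically the identity near $\psi_i(\R^s\cap r_i\overline{\B^n})$; Lemma \ref{mainstepaut} then explicitly assumes $\phi=\mathrm{id}$ near $K\cap M$.

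A second, related gap is that after the first step the relevant manifold is no longer $\R^s$ but a perturbation of it, so the paper applies the general Theorem \ref{FRCarleman} (which requires the nice projection property) to $M_i=\psi_i(\R^s)$, and maintains via conditions $(3_i)$ and condition 4) of Lemma \ref{mainstepaut} that each $\psi_i(\R^s)$ remains a uniformly small $\mathcal C^1$-perturbation of $\R^s$. Your formulation conjugates everything back by $F_{j-1}^{-1}$ and applies Theorem \ref{main} to $\R^s$ itself, but then $F_{j-1}^{-1}\circ\phi(B_j\setminus B_{j-1})$ lies in $F_{j-1}^{-1}(\R^s)$, which is uncontrolled precisely in the region you are approximating for the first time; patching this to the identity in an outer annulus while keeping $\Phi_t(\R^s)$ totally real and (after Proposition \ref{fixingisotopy}) polynomially convex for all $t$ is not guaranteed by the smooth annulus theorem and requires the quantitative control the paper carefully propagates. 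The paper also replaces your annulus-theorem patching with a cutoff of the time-dependent vector field generating $\phi_i$, which keeps the construction within the framework where Theorem \ref{FRCarleman} applies. So while the skeleton of your argument matches the paper, the load-bearing technical lemmas — Lemma \ref{glue}, Lemma \ref{mainstepaut}, and the invariants they maintain — are missing, and the disjointness claim as stated is wrong.
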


Our main Theorem \ref{main} generalizes work of Forstneri\v{c}-Rosay \cite{FR}, Forstneri\v{c} \cite{Forstneric1994}
and Forstneri\v{c}-L\o w-\O vrelid \cite{FLO}, where similar results were proved 
for compact totally real manifolds.  The proof of the main theorem depends on the Anders\'{e}n-Lempert
theory and also results on Carleman approximation by entire \emph{functions}.  Carleman 
approximation by entire functions on $\mathbb R^s\subset\mathbb C^n$ was 
proved by Scheinberg \cite{Scheinberg} and Hoischen \cite{Hoischen}.  On smooth 
curves in $\mathbb C^n$ it was proved by Alexander \cite{Alexander}, and 
more generally, for dendrites, it was proved by Gauthier and Zeron \cite{GauthierZeron}.

\section{Prelimiaries}

\begin{definition}
For a function $f\in\mathcal C^k(\mathbb C^n)$ we let $|f|_{k,z}$ denote the pointwise semi-norm
$$
|f|_{k,z}:=\underset{|\alpha|\leq k}{\sum}|\frac{d^{\alpha}f}{dx^{\alpha}}(z)|.
$$
If $\phi=(f_1,...,f_m):\mathbb C^n\rightarrow\mathbb C^m$ is a $\mathcal C^k$-smooth map, we define 
$$
|\phi|_{k,z}:=\underset{1\leq j \leq m}{\sum}|f_j|_{k,z}
$$
\end{definition}

In this article we will be interested to approximate $\mathcal C^\infty$-smooth maps 
$$
\phi:M\subset\mathbb C^n\rightarrow\mathbb C^n
$$
by holomorphic automorphisms of $\mathbb C^n$, with respect to the norm $|\cdot|_{k,x}$ along $M$, 
where $M$ is a non-compact totally real embedded submanifold of $\mathbb C^n$.  Since holomorphic
functions satisfy the Cauchy-Riemann equations, this clearly requires some assumptions on $\phi$.  We will work 
in the following class of maps.    

\begin{definition}
Let $A$ be a subset of $\mathbb C^n$ and let $\phi:\mathbb C^n\rightarrow\mathbb C^m$
be a smooth mapping of class $\mathcal C^k$.  We will write 
$\phi\in\mathcal H_k(\mathbb C^n,A)$ if any component function $f$ of $\phi$ has
the property that $\overline\partial f$ vanishes to order $k-1$ on $A$,
\emph{i.e.}, $\overline\partial\frac{d^\alpha f}{dx^\alpha}(z)=0$ 
for all $\|\alpha\|<k$ and all $z\in A$, $z=(z_1,...,z_n), z_j=x_{2j-1}+i\cdot x_{2j}$.
\end{definition}

\begin{definition}
Let $K\subset\mathbb C^n$ be compact.  As usual we let 
$$
\widehat K:=\{z\in\mathbb C^n; |f(z)|\leq\|f\|_K, \forall f\in\mathcal O(\mathbb C^n)\}
$$
denote the \emph{polynomially convex hull of $K$}.  If $K\subset\mathbb C^n$ 
is any closed subset we write $K$ as an increasing union $K=\underset{j\in\mathbb N}{\cup}K_j$
of compact sets $K_j$ with $K_j\subset K_j^{\circ}$, and define the \emph{holomorphically convex hull} of $K$ by
$$
\widehat K:=\underset{j\in\mathbb N}{\cup}\widehat K_j.
$$
We define a set function $h$ defined on closed subsets $K$ of $\mathbb C^n$
by 
$$
h(K):=\widehat K\setminus K.
$$
We say that a closed set $K\subset\mathbb C^n$ has \emph{bounded E-hulls} (E=exhaustion)
if for any compact set $Y\subset\mathbb C^n$ the set $h(K\cup Y)$ is bounded. 
\end{definition}

\section{The fundamental result of Anders\`{e}n and Lempert}

For a non zero vector $v \in \C^n$ let $\pi_v$ denote the projection of $\C^n$ along $v$ to the
orthogonal complement of $\C v$.

\begin{lemma}\label{andersen}
Let $X$ be a polynomial vector field on $\mathbb C^n$ and let 
$\mathrm{v}\in\mathbb C^n$ be a non zero vector.
Then for any $\epsilon>0$
$X$ can be written as sum $X=\sum_{j=1}^{N_1}X_j+\sum_{j=1}^{N_2}Y_j$ such that 
the following holds.   
\begin{itemize}
\item[(a)] $X_j(z)=f_j(\pi_{v_j}(z))v_j$ with $f_j\in\mathcal(\mathbb C^{n-1})$, $\|v_j-\mathrm{v}\|<\epsilon$, and  
\item[(b)] $Y_j(z)=g_j(\pi_{w_j}(z)))\cdot \langle z,w_j\rangle w_j$ with $g_j\in\mathcal(\mathbb C^{n-1})$,$\|w_j-\mathrm{v}\|<\epsilon.$
\end{itemize} Here we identify $\C^n$ with it's tangent space at each point as usual.
If $\mathrm{div}X=0$ the terms on the form (b) are not needed.
\end{lemma}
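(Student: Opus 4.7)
The strategy is to recast this as a finite-dimensional linear algebra problem. Let $d:=\deg X$ and let $\mathcal V_d$ denote the finite-dimensional $\mathbb C$-vector space of polynomial vector fields on $\mathbb C^n$ of degree at most $d$. Set $U:=B(\mathrm v,\epsilon)\setminus\{0\}$, and let $\mathcal S_U\subseteq\mathcal V_d$ be the linear span of all shears $f(\pi_v z)\,v$ and overshears $g(\pi_w z)\langle z,w\rangle w$ with $v,w\in U$ and $f,g$ polynomials of degree at most $d$. It suffices to show $\mathcal S_U=\mathcal V_d$, since then $X$ is a finite linear combination of such terms and the scalar coefficients can be absorbed into $f$ and $g$. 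The divergence-free case is handled identically with the span $\mathcal S_U^{\mathrm{sh}}$ of shears alone and the divergence-free subspace $\mathcal V_d^{\mathrm{div}}$.

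The first ingredient is the classical Andersén--Lempert decomposition with \emph{unrestricted} directions: $\mathcal S_{\mathbb C^n\setminus\{0\}}=\mathcal V_d$, and $\mathcal V_d^{\mathrm{div}}$ is spanned by shears alone. I would prove this by induction on $d$, reducing to monomial fields $z^\alpha\partial_{z_k}$: these are shears along $e_k$ when $\alpha_k=0$, overshears along $e_k$ when $\alpha_k=1$, and for $\alpha_k\ge 2$ one produces $z^\alpha\partial_{z_k}$ modulo monomials of strictly lower $z_k$-degree by taking an overshear in an auxiliary direction $e_k+\lambda e_j$ ($j\ne k$) and invoking the induction hypothesis; divergence-freeness is preserved at each reduction step.

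The second ingredient is a soft perturbation argument establishing $\mathcal S_U=\mathcal S_{\mathbb C^n\setminus\{0\}}$ for every non-empty Euclidean-open $U\subset\mathbb C^n\setminus\{0\}$. Suppose $\lambda\in\mathcal V_d^*$ vanishes on $\mathcal S_U$. For each polynomial $f$ on $\mathbb C^{n-1}$ of degree at most $d$, the map
$$
\Phi_f:\mathbb C^n\setminus\{0\}\longrightarrow\mathcal V_d,\qquad v\longmapsto f(\pi_v z)\,v,
$$
is real-analytic in $v$, because $\pi_v z=z-\tfrac{\langle z,v\rangle}{\|v\|^2}v$ is real-analytic on $\mathbb C^n\setminus\{0\}$. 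Hence $v\mapsto\lambda(\Phi_f(v))$ is a scalar real-analytic function on the connected real-analytic manifold $\mathbb C^n\setminus\{0\}$ that vanishes on the open set $U$, and therefore vanishes identically by the real-analytic identity principle. The analogous statement for overshears holds by the same argument. Thus $\lambda$ kills every shear and overshear in every direction, so $\lambda|_{\mathcal S_{\mathbb C^n\setminus\{0\}}}=0$; combined with the first step this forces $\lambda=0$ and hence $\mathcal S_U=\mathcal V_d$. The divergence-free version is identical using only shears.

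The main obstacle is the classical Andersén--Lempert decomposition of the second paragraph, whose inductive bookkeeping for high-order monomials $z_k^m z^{\alpha'}\partial_{z_k}$ with $m\ge 2$ is the algebraic heart of the argument and is where one must verify that divergence-freeness is preserved under the reduction. By contrast, once the unrestricted-direction decomposition is granted, the restriction of the directions to a prescribed neighborhood of $\mathrm v$ is purely a finite-dimensional real-analytic density argument and introduces no further geometric difficulty.
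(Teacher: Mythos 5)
Your argument is correct in outline but takes a genuinely different route from the paper. The paper reduces to homogeneous fields of degree $k$ and then invokes Lemma~7.6 of \cite{KL}, which directly exhibits a \emph{basis} of the space of degree-$k$ homogeneous polynomial vector fields consisting of shears and overshears whose directions and associated linear forms are all within $\epsilon$ of the prescribed $v_0$; the perturbation of directions is packaged inside that rather strong combinatorial lemma. You instead decouple the two issues: first cite the classical Andersén--Lempert decomposition with \emph{unrestricted} directions, and then show by a soft finite-dimensional argument that restricting the directions to an arbitrary non-empty open $U\subset\mathbb C^n\setminus\{0\}$ costs nothing. That second step is correct and is the interesting contribution: $v\mapsto\pi_v z = z - \langle z,v\rangle v/\|v\|^2$ is real-analytic (though not holomorphic, owing to the conjugate-linearity of the Hermitian pairing in $v$) on the connected set $\mathbb C^n\setminus\{0\}$, so a linear functional on the finite-dimensional $\mathcal V_d$ annihilating all shears and overshears with directions in $U$ annihilates them for every direction by the identity principle, hence vanishes by the unconstrained decomposition. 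This is more modular than the cited lemma, needs only a spanning set rather than a basis, and isolates exactly what carries content (the unconstrained decomposition) from what is automatic (the restriction on directions). One caveat: your inductive sketch of the unconstrained decomposition in the divergence-free case does not quite parse --- you reduce to monomial fields $z^\alpha\partial_{z_k}$, which are not individually divergence-free when $\alpha_k\geq 1$, and you reduce via overshears, which also fail to be divergence-free, so the claim that \emph{divergence-freeness is preserved at each reduction step} is wrong as written and one needs Andersén's original bookkeeping there. Since you explicitly flag the classical decomposition as the cited black box, this is a presentational slip rather than a genuine gap in your argument.
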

We will say that a decomposition of $X$ like this is a decomposition \emph{respecting $(v,\epsilon)$}.

\begin{proof}:  Writing $X$ as a sum of its homogeneous parts it is sufficient to prove the lemma
for $X= X_k$ homogeneous of degree $k$. For a homogeneous polynomial vector field on $\C^n$
can  we apply the following lemma proved in
\cite{KL} (Lemma 7.6).

\begin{lemma}
There exist $n \cdot {n+k-2 \choose n-1}- {n+k-2 \choose n-1}$ linear forms $\lambda_i \in (\C^n)^*$ and vectors $v_i\in \C^n$ with $\lambda_i (v_i)=0$ and
$\Vert v_i\Vert =1 $ $i=1, 2, \ldots,  {n+k-2 \choose n-1}$ together with ${n+k-2 \choose n-1}$
linear forms $\tilde\lambda_j \in (\C^n)^*$ and  vectors $w_j \in \C^n$ with
$\lambda_j (w_j)=0$ and
$\Vert w_j\Vert =1 $ $j=1, 2, \ldots,  n \cdot {n+k-2 \choose n-1}- {n+k-2 \choose n-1}$ such that the homogenous
polynomial maps

$$z\mapsto (\lambda_i (z))^k v_i, \quad  i=1, 2, \ldots, n \cdot {n+k-1 \choose n-1}- {n+k-2 \choose n-1}$$

of degree $k$ together with the homogenous polynomial maps

$$ z\mapsto (\tilde\lambda_j (z))^{k-1} \langle z, w_j\rangle w_j, \quad j=1, 2, \ldots, {n+k-2 \choose n-1}$$

of degree $k$ form a basis of the vector space $V_k \cong  S^k ((\C^n)^*) \otimes \C^n$ of homogenous polynomial maps
of degree $k$. Moreover if $v_0 \in \C^n$ and a non-zero functional $\lambda_0 \in (\C^n)^*$ with $\lambda_0 (v_0)=0$ and
$\Vert v_0\Vert =1 $ and a number $\epsilon >0$ are given, then the vectors $v_i, w_j$ together with the functionals $\lambda_i,\tilde \lambda_j$ can be chosen with
$\Vert v_0 -v_i\Vert  <\epsilon$ $\Vert v_0 -w_j\Vert  <\epsilon$ and $\Vert \lambda_0 - \lambda_i \Vert <\epsilon$, $\Vert \lambda_0 -\tilde \lambda_j \Vert <\epsilon$. 
\label{basis}
\end{lemma}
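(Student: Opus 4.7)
The plan is to split $V_k$ using the divergence short exact sequence
\[
0 \longrightarrow V_k^{\mathrm{div}=0} \longrightarrow V_k \xrightarrow{\;\mathrm{div}\;} S^{k-1}((\C^n)^*) \longrightarrow 0
\]
(surjectivity of $\mathrm{div}$ is immediate on monomials), and reduce the lemma to two independent spanning statements: the type-(a) fields span $V_k^{\mathrm{div}=0}$, and the divergences of the type-(b) fields form a basis of $S^{k-1}((\C^n)^*)$. The preliminary divergence computations are routine: $\mathrm{div}((\lambda(z))^k v) = k(\lambda(z))^{k-1}\lambda(v)$ vanishes exactly under the constraint $\lambda(v)=0$, and the product rule together with $\tilde\lambda_j(w_j)=0$ and $\|w_j\|=1$ gives $\mathrm{div}(Y_j) = (\tilde\lambda_j(z))^{k-1}$.

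The key content is showing that the constrained family $\{(\lambda(z))^k v : \lambda(v)=0\}$ spans all of $V_k^{\mathrm{div}=0}$. I would prove this by duality: given $L\in V_k^*$ annihilating the family, set $F(\lambda,v):=L((\lambda(z))^k v)$, a polynomial on $(\C^n)^*\times\C^n$ of bidegree $(k,1)$. Then $F$ vanishes on $\{\lambda(v)=0\}$, which is an irreducible hypersurface for $n\geq 2$; hence $F(\lambda,v) = \lambda(v)\,g(\lambda)$ with $g$ homogeneous of degree $k-1$. A short coefficient comparison then identifies $L$ with $\tilde L\circ\mathrm{div}$ for a unique $\tilde L$ encoded by $g$, so $L$ vanishes on $V_k^{\mathrm{div}=0}$, proving equality of the two subspaces.

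For the type-(b) claim, the $(k-1)$-st powers $(\tilde\lambda(z))^{k-1}$ for $\tilde\lambda$ in any non-empty open subset of $(\C^n)^*$ span $S^{k-1}((\C^n)^*)$ by classical polarization (non-degeneracy of the Veronese). Combining, the type-(a) fields form a basis of $\ker(\mathrm{div})$ while the type-(b) fields lift a basis of $\mathrm{im}(\mathrm{div})$, so together they are a basis of $V_k$ of the expected cardinalities. For the perturbation clause, the condition that the chosen parameters yield a basis is the non-vanishing of an algebraic determinant in $(\lambda_i,v_i,\tilde\lambda_j,w_j)$, hence Zariski open on the irreducible constraint variety $\{\lambda(v)=0\}$; density lets me intersect this open condition with an arbitrary complex neighborhood of $(\lambda_0,v_0)$, and rescaling each $v_i$ or $w_j$ to unit norm does not affect the span.

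The main difficulty I expect is the spanning statement for type (a): identifying the span of the constrained rank-one tensors $(\lambda(z))^k v$ with the entire divergence-free subspace is not obvious from the definitions, and it is exactly here that the irreducibility of the bilinear pairing $\lambda(v)$ (valid precisely for $n\geq 2$) plays a decisive role. The remaining ingredients — the type-(b) divergence computation, classical polarization for the Veronese, and the Zariski-density perturbation — are standard.
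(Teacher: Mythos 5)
Note first that the paper does not actually prove Lemma \ref{basis}: it is invoked as Lemma~7.6 of the cited reference \cite{KL} (Kutzschebauch--Lodin), and the surrounding text in Section~3 only reduces to the homogeneous case and then appeals to that citation. So there is no in-paper proof to compare against; I can only assess your argument on its own.

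Your proof is correct, and the structure is clean. The divergence exact sequence is the right organizing principle, and your two computations (that $\mathrm{div}((\lambda(z))^k v)=k(\lambda(z))^{k-1}\lambda(v)$, and that $\mathrm{div}(Y_j)=(\tilde\lambda_j(z))^{k-1}$ using $\tilde\lambda_j(w_j)=0$ and $\|w_j\|=1$) are exactly right. The heart of the matter, as you correctly identify, is the spanning of $\ker(\mathrm{div})$ by the constrained simple tensors $(\lambda(z))^k v$ with $\lambda(v)=0$, and your duality argument handles it properly: if $L\in V_k^*$ annihilates the constrained family, then $F(\lambda,v)=L((\lambda(z))^k v)$ has bidegree $(k,1)$ and vanishes on the irreducible quadric $\{\lambda(v)=0\}$ (irreducible since the associated quadratic form in $2n$ variables has rank $2n\geq 4$ for $n\geq 2$), so $F=\lambda(v)\,g(\lambda)$ with $g\in S^{k-1}(\C^n)$. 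The point you compress into ``a short coefficient comparison'' deserves one more sentence: the natural pairing $\tilde L\mapsto(\lambda\mapsto \tilde L(\lambda^{k-1}))$ is an isomorphism $(S^{k-1}((\C^n)^*))^*\to S^{k-1}(\C^n)$ (injective since the Veronese image spans, bijective by equal dimensions $\binom{n+k-2}{n-1}$), so there is a unique $\tilde L$ with $\tilde L(\lambda^{k-1})=g(\lambda)/k$, and then $L$ and $\tilde L\circ\mathrm{div}$ agree on all \emph{unconstrained} $(\lambda(z))^k v$, which span $V_k$; hence $L=\tilde L\circ\mathrm{div}$ and $L$ kills $\ker(\mathrm{div})$. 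The perturbation clause is also handled correctly: the basis condition is a nonvanishing determinant, hence Zariski open and (once shown nonempty by the spanning result) Zariski dense on the irreducible product of constraint quadrics, so it meets every classical neighborhood of the diagonal point $(\lambda_0,v_0,\dots)$; and rescaling $v_i$ (resp.\ $w_j$) to unit norm is absorbed into $\lambda_i$ (resp.\ $\tilde\lambda_j$) by a factor close to $1$.

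Two cosmetic points. The cardinalities in the statement as printed contain a typo (the count for the $\lambda_i$ should read $n\binom{n+k-1}{n-1}-\binom{n+k-2}{n-1}$, which is what your dimension count delivers). And your argument tacitly assumes $n\geq 2$ (you say so), which is the relevant regime: for $n=1$ the constraint set $\{\lambda(v)=0,\ v\neq 0\}$ forces $\lambda=0$, $\ker(\mathrm{div})=0$, and the count for the $\lambda_i$ is $0$, so the statement degenerates consistently. As for how this compares to the proof in \cite{KL}: the classical Anders\'en--Lempert treatments of this spanning statement tend to be more computational (explicit manipulation of monomials or of the Veronese), whereas your duality-plus-irreducibility argument isolates the one genuinely nontrivial algebraic fact ($\lambda(v)$ divides $F$) in a transparent way. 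That is a pleasant conceptual repackaging, and it is worth noting that the approach gives the ``can be chosen near any prescribed $(\lambda_0,v_0)$'' clause essentially for free, which in more hands-on arguments requires a separate genericity discussion.
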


Remark that the normalization for the vectors $v_i$ and $v_0$ is not important since constants can be moved over to the linear functionals. Now finally remark that if $\lambda$ is a linear
functional with a non zero vector $v$ in it's kernel, then it factors over the projection $\pi_v$.Thus
$(\lambda_i (z))^k = f_i (\pi_{v_i})$.
\end{proof}

\section{Perturbations of families of totally real manifolds}

Without the compact set $K$ the following perturbation result was proved by Forstneri\v{c} and 
Rosay in the case that $M$ is a real analytic surface, and by Forstneri\v{c} 
assuming $dim(M)\leq (2n/3)$.   It was proved without parameters by 
the second author and E. L\o w in \cite{LW}.   The proof we give here 
is very similar to the non-parametric case, building on the original 
idea of Forstneri\v{c} and Rosay.  

\begin{proposition}\label{fixingisotopy}
Let $K\subset\mathbb C^n$ be a compact set, and 
let $M\subset\mathbb C^n$ be a smooth manifold with $dim_{\mathbb R}(M)<n$. 
Let $f:[0,1]\times (K\cup M)\rightarrow\mathbb C^n$ be an isotopy of continuous 
injective maps, let $K\subset U'\subset\subset U$ be 
open sets, and let $G_t:U\rightarrow U_t\subset\mathbb C^n$ be 
a continuous family of homeomorphisms such that $K_t=f_t(K)\subset U_t=G_t(U)$
for all $t$.  Assume that the following hold for all $t$:

\begin{itemize}
\item[1)] $\widehat K_t\subset U'_t:=G_t(U')$, and 
\item[2)] $f_t:\overline{M\setminus K}\rightarrow\mathbb C^n$ is a 
$\mathcal C^k$-smooth isotopy which is an embedding onto 
a totally real manifold. 
\end{itemize}
Then for any $\epsilon>0$ there exist $g_t:[0,1]\times  (K\cup M)\rightarrow\mathbb C^n$ 
and an open set $U''\subset U$
such that 
\begin{itemize}
\item[1)] $g_t|_{(K\cup M)\cap U''}=f_t$ for all $t$, 
\item[2)] $|g_t - f_t|_{x,k}<\epsilon$ for all $x\in M\setminus K$, and 
\item[3)] $\mathrm{h}(g_t(K\cup M))\subset U'_t$ for all $t$.
\end{itemize}
Assuming also that $f_0(K\cup M)$ and $f_1(K\cup M)$ are polynomially convex, 
we may achieve that $g_0=f_0$ and $g_1=f_1$.
\end{proposition}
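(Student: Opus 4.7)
The plan is to mimic, with parameters, the inductive scheme of the non-parametric case due to the second author and L\o w \cite{LW}, which itself builds on the Forstneri\v{c}--Rosay perturbation idea.

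First I would fix an open set $U''$ with $K\subset U''\Subset U'$ and exhaust the complement $M\setminus U''$ by a nested sequence of compact smooth submanifolds with boundary $N_1\subset N_2^\circ\subset N_2\subset\cdots$, together with a summable sequence $\epsilon_j>0$ satisfying $\sum_j\epsilon_j<\epsilon$. Inductively I would produce isotopies $g^{(j)}_t$ with $g^{(j)}_t=f_t$ on $(K\cup M)\cap U''$, with $|g^{(j+1)}_t-g^{(j)}_t|_{k,x}<\epsilon_j$ and support in $N_{j+1}\setminus N_j^\circ$, and with $\mathrm{h}\bigl(g^{(j)}_t(K\cup(M\cap U'')\cup N_j)\bigr)\subset U'_t$ for every $t$. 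Since the perturbations for levels $j'>j$ are supported outside $N_j$, the limit $g_t:=\lim_j g^{(j)}_t$ agrees with $g^{(j)}_t$ on $N_j$, and taking the union over $j$ in the definition of the hull of a non-compact set yields $\mathrm{h}(g_t(K\cup M))\subset U'_t$.

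The non-parametric ingredient is a Forstneri\v{c}--Rosay type lemma: given a polynomially convex compact $A\subset\C^n$ with $\widehat A$ inside an open $V$, and a totally real ``collar'' piece $P$ of dimension $<n$ attached to $A$ along its boundary, one can $\mathcal C^k$-perturb $P$ arbitrarily slightly (fixing a neighborhood of $\partial P$) so that $\mathrm{h}(A\cup P')\subset V$. The argument normalizes $P$ to lie in $\R^s\subset\C^n$ and pushes it by a bump in a transverse real direction; small bumps suffice to extrude any offending hull points.

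The main obstacle is the parameter-dependent inductive step. For each fixed $t_0\in[0,1]$ the non-parametric lemma provides a perturbation $\eta^{(t_0)}$ of $f_{t_0}|_{N_{j+1}\setminus N_j^\circ}$ achieving the hull condition at $t_0$. Upper semicontinuity of the polynomial hull under Hausdorff convergence, together with continuity of $t\mapsto U'_t$, implies that the hull condition is open both in the perturbation and in the parameter, so the same $\eta^{(t_0)}$ remains admissible on a small open interval $I_{t_0}\ni t_0$. By compactness of $[0,1]$ finitely many $I_\alpha$ cover; with a subordinate partition of unity $\{\chi_\alpha\}$ I would set
$$
g^{(j+1)}_t\;:=\;g^{(j)}_t+\sum_\alpha\chi_\alpha(t)\,\eta^{(t_\alpha)}.
$$
After uniformly rescaling the $\eta^{(t_\alpha)}$, the convex combination lands in the open set of admissible perturbations for every $t$ simultaneously; coordinating the covering and the bump magnitudes is the subtle book-keeping at the heart of the argument. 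For the final assertion, when $f_0(K\cup M)$ and $f_1(K\cup M)$ are polynomially convex, multiplying each $\eta^{(t_\alpha)}$ by a smooth cutoff $\rho(t)$ vanishing near $\{0,1\}$ preserves the hull condition by the same openness argument, while forcing $g_0=f_0$ and $g_1=f_1$.
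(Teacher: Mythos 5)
Your proposal shares with the paper the broad inductive shape (reduce to perturbing one compact totally real chunk at a time, gluing with cutoffs), but the mechanism you use to handle the parameter $t$ has a genuine gap. In the step where you write
$$
g^{(j+1)}_t := g^{(j)}_t+\sum_\alpha\chi_\alpha(t)\,\eta^{(t_\alpha)},
$$
you implicitly need the set of admissible perturbations (those making $\mathrm{h}(\cdot)\subset U'_t$) to be convex, or at least closed under the convex combinations that the partition of unity produces. Openness of this set (via approximation of the hull by polynomial polyhedra) is fine, and gives you the small intervals $I_{t_\alpha}$. But for $t\in I_\alpha\cap I_\beta$ you have two separately admissible perturbations $\eta^{(t_\alpha)},\eta^{(t_\beta)}$ and form $\chi_\alpha(t)\eta^{(t_\alpha)}+\chi_\beta(t)\eta^{(t_\beta)}$; there is no reason this convex combination keeps the hull controlled. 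The ``uniform rescaling'' you invoke cannot fix this: the zero perturbation is generally \emph{not} admissible (that is the whole point of having to perturb at all, except at $t=0$ where $f_0(K\cup M)$ is polynomially convex), so shrinking the $\eta$'s moves you \emph{out} of the admissible open set, not deeper into it. The ``subtle book-keeping'' you flag is where the argument actually breaks.

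The paper avoids this obstruction entirely by making the perturbation land in a \emph{family of complex hypersurfaces holomorphic in} $t$. Concretely (Lemma \ref{push}), the parametric Forstneri\v{c} approximation produces automorphisms $F_t$, holomorphic in $t\in W\supset[0,1]$, approximating $f_t\circ\phi$; the pushed cube lies in $Z_t=F_t(\mathbb C^{n-1}\times\{0\})$, and $Z\subset W\times\mathbb C^n$ is an analytic hypersurface, giving coherence in $t$ for free with no gluing in $t$ required. Proposition \ref{fixingisotopy} is then proved by induction on the dimension of the cube (not on an exhaustion of $M$): subdivide into a fine grid $\Gamma_m$ and sub-cubes $Q_{ij}$, arrange by the inductive hypothesis that $K_t\cup\Gamma_{m,t}$ plus any $n+1$ sub-cubes has controlled hull, push the interiors $Q^\beta_{ij}$ into pairwise ``$(n+2)$-transverse'' hypersurfaces $Z_{ij,t}$ via Lemma \ref{push}, and conclude by a Jensen-measure argument: a Jensen measure for a hull point can charge at most $n+1$ of the $Q^\beta_{ij}$, so the hull point is already covered by the preliminary $n+1$-cube bound. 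This combinatorial step is what replaces the convexity you would need; without something like it, the local-to-global passage in $t$ does not close.

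As a secondary point, even your base non-parametric lemma is not quite Forstneri\v{c}--Rosay as you state it (``push by a bump in a transverse real direction''); in the dimension range $\dim_{\mathbb R}M<n$ one uses the stronger L\o w--Wold result, and the push has to be into a \emph{complex hypersurface} rather than a generic real bump, precisely so that the Jensen-measure argument applies.
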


\begin{proposition}\label{locallycvx}
Let $\Psi:\overline B\rightarrow\mathbb C^k$ be a $\mathcal C^1$-smooth 
map of the form $\Psi(x)=(x,\psi(x))$, where $B$ is the unit ball in $\mathbb R^k$, 
and we write $\mathbb C^k=\mathbb R^k\oplus i\mathbb R^k$.
Assume that $\psi$ is Lipschitz-$\alpha$ with $\alpha<1$.  Then there 
exist $\epsilon,\delta$ such that the following hold:  for any 
$\tilde\Psi$ with $\|\tilde\Psi-\Psi\|_{1,x}<\epsilon$ for all $x\in\overline B$ and any 
point $z_0\in\mathbb C^k\setminus\tilde\Psi(\overline{B})$ with $\pi_x(z_0)\in\pi_x(\tilde\Psi(\overline B_{1/2}))$, 
there exists an entire function $g$ such that 
\begin{itemize}
\item[1)] $g(z_0)=1$, 
\item[2)] $\|g\|_{\tilde\Psi(\overline B)}<1$, and 
\item[3)] $|g(z)|<1$ for all $z$ such that $\mathrm{dist}(z,\tilde\Psi(bB))\leq\delta$.
\end{itemize} 
\end{proposition}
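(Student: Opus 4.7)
The plan is to produce $g$ explicitly as the exponential of a quadratic polynomial, so that conditions (1)--(3) all reduce to a single real-part estimate on the exponent. First I would shrink $\epsilon$ until $\pi_x\circ\widetilde\Psi:\overline B\to\mathbb R^k$ is a $C^1$-diffeomorphism onto its image; then $\widetilde\Psi(\overline B)$ is the graph, over that image, of an induced function $\widetilde\psi$. Since $\psi$ has Lipschitz constant $\alpha<1$ and the perturbation is $C^1$-small, $\widetilde\psi$ inherits a Lipschitz constant $\alpha'<1$ (close to $\alpha$), uniformly in admissible $\widetilde\Psi$.

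Writing $z_0=\widetilde x_0+iy_0$ with $\widetilde x_0:=\pi_x(z_0)\in\pi_x(\widetilde\Psi(\overline B_{1/2}))$ and $y_0\in\mathbb R^k$, and noting that $z_0\notin\widetilde\Psi(\overline B)$ forces $d:=|y_0-\widetilde\psi(\widetilde x_0)|>0$, I define the entire function
$$
g(z):=\exp\!\Bigl(-\sum_{j=1}^k\bigl(z_j-\widetilde x_{0,j}-i\widetilde\psi_j(\widetilde x_0)\bigr)^2-d^2\Bigr).
$$
At $z_0$ the $j$-th bracketed factor equals $i(y_{0,j}-\widetilde\psi_j(\widetilde x_0))$, so the sum of squares is $-d^2$ and the exponent vanishes, giving $g(z_0)=1$. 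For a graph point $z$ with $\pi_x(z)=\widetilde x$ the $j$-th factor becomes $(\widetilde x_j-\widetilde x_{0,j})+i(\widetilde\psi_j(\widetilde x)-\widetilde\psi_j(\widetilde x_0))$, so the real part of the exponent is
$$
-|\widetilde x-\widetilde x_0|^2+|\widetilde\psi(\widetilde x)-\widetilde\psi(\widetilde x_0)|^2-d^2\leq-(1-\alpha'^2)|\widetilde x-\widetilde x_0|^2-d^2\leq-d^2,
$$
which yields $\|g\|_{\widetilde\Psi(\overline B)}\leq e^{-d^2}<1$ and proves (2).

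For (3), once $\epsilon<1/8$ a point $z\in\widetilde\Psi(bB)$ satisfies $|\widetilde x|\geq 1-\epsilon$ while $|\widetilde x_0|\leq 1/2+\epsilon$, hence $|\widetilde x-\widetilde x_0|\geq 1/4$; plugging into the same chain of inequalities strengthens the real-part bound to at most $-(1-\alpha'^2)/16-d^2$ on $\widetilde\Psi(bB)$. The gradient of the quadratic exponent is $2(z-\widetilde x_0-i\widetilde\psi(\widetilde x_0))$, uniformly bounded on any fixed neighbourhood of $\widetilde\Psi(\overline B)$ because $\widetilde x_0$ and $\widetilde\psi(\widetilde x_0)$ lie in bounded sets depending only on $\Psi$; continuity therefore supplies a universal $\delta>0$ extending the strict inequality to the $\delta$-neighbourhood of $\widetilde\Psi(bB)$. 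The only genuine obstacle is the bookkeeping required to ensure $\epsilon$ and $\delta$ depend only on $\Psi$ and $\alpha$ rather than on the particular $\widetilde\Psi$ or $z_0$; this follows from the $C^1$-stability of $\alpha'$ and the uniform boundedness just noted.
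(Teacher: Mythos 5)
Your construction is exactly the one the paper uses: $g(z)=c\,e^{-h(z)}$ with $h(z)=(z-z')^2$ and $z'$ the graph point above $\pi_x(z_0)$, the Lipschitz hypothesis giving $\mathrm{Re}\,h\geq(1-\alpha'^2)|\tilde x-\tilde x_0|^2$ on the graph and a strict positive lower bound near the boundary. The only difference is cosmetic: you work through the $C^1$-stability explicitly (passing to the induced $\tilde\psi$), whereas the paper does the unperturbed case and simply asserts that the estimates persist under small perturbations.
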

\begin{proof}
We will give the argument considering only the map $\Psi$, and 
it will be clear that it is stable under small perturbations.  
Write $x_0=\pi_x(z_0), z'=x_0+i\psi(x_0)$, and 
consider 
the function $h(z)=(z-z')^2$ on $\Psi(\overline B)$: we 
have $Re(h(z))=Re(((x-x_0)+ i(\psi(x)-\psi(x_0)))^2) = |x-x_0|^2 - |\psi(x)-\psi(x_0)|^2\geq (1-\alpha^2)|x-x_0|^2$. 
Clearly $Re(h(z_0))<0$ and so defining $g(z):=ce^{-h(z)}$ takes care of 1) and 2) for 
a suitable constant $c>0$.  And if $\delta$ is chosen small enough and $\mathrm{dist}(z,\Psi(b(B)))\leq \delta$, 
we have $|\pi_x(z)-x_0|>0$ and $|\pi_y(z)|<|\pi_x(z)|$, and so by the same calculation as above 
we have $|g(z)|<1$.  Clearly these estimates can be made to hold under small perturbations.  
\end{proof}

\begin{corollary}\label{locallycvx2}
Let $(\Psi,\Phi):\overline B\rightarrow \mathbb C^n=\mathbb C^k\times\mathbb C^{n-k}$ be 
a $\mathcal C^1$-smooth map, where $B$ is the unit ball 
in $\mathbb R^k$, and $\Psi$ is as in the previous proposition.  Let $\epsilon$ be as above. 
Then if $|\tilde\Psi - \Psi|_{1,x}<\epsilon$ for all $x\in\overline B$, we have that 
$K:=(\tilde\Psi,\tilde\Phi)(\overline B_{1/2})$ is polynomially convex, where $\tilde\Phi$
is any continuous map. 
\end{corollary}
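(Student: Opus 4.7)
The plan is to separate each $w = (w_1, w_2) \in \mathbb{C}^n \setminus K$ from $K$ by an entire function, and then Taylor-approximate it by a polynomial on a neighborhood of $K \cup \{w\}$. Write $\pi_1 \colon \mathbb{C}^n \to \mathbb{C}^k$ for the projection onto the first $k$ coordinates and set $S := \tilde\Psi(\overline B_{1/2})$. Two cases arise according to whether $w_1 \in S$ or not.

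Case (i): $w_1 \notin S$. Here it suffices to separate $w_1$ from $S$ in $\mathbb{C}^k$ by an entire function $q$, since $q \circ \pi_1$ then separates $w$ from $K$. When $w_1 \notin \tilde\Psi(\overline B)$ and $\pi_x(w_1) \in \pi_x(S)$, Proposition \ref{locallycvx} provides $q$ directly. The remaining configurations are handled by the same Gaussian construction $g(z) = c e^{-(z - z')^2}$ used in the proof of Proposition \ref{locallycvx}, with the basepoint $z'$ adapted to the position of $w_1$: for example, when $w_1 = \tilde\Psi(x_1)$ with $x_1 \in \overline B \setminus \overline B_{1/2}$, take $z' := \tilde\Psi(x_1)$, so that the Lipschitz bound $\alpha < 1$ forces $\mathrm{Re}((z - z')^2) \geq (1 - \alpha^2 - o(1))|x - x_1|^2 \geq (1-\alpha^2 - o(1))(|x_1| - 1/2)^2 > 0$ uniformly for $z = \tilde\Psi(x) \in S$, giving the required decay.

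Case (ii): $w_1 = \tilde\Psi(x_0)$ for some $x_0 \in \overline B_{1/2}$, so $w_2 \neq \tilde\Phi(x_0)$. Mimicking the construction from Proposition \ref{locallycvx} with basepoint $z' := \tilde\Psi(x_0)$, define $g(z) := e^{-(z - \tilde\Psi(x_0))^2}$. Then $g(\tilde\Psi(x_0)) = 1$, and the Lipschitz estimate gives $|g(\tilde\Psi(x))| \leq e^{-(1 - \alpha^2 - o(1))|x - x_0|^2}$ on $\tilde\Psi(\overline B)$, so $|g|$ attains its maximum on $\tilde\Psi(\overline B)$ at $w_1$ with value $1$. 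Fix an index $j$ with $w_{2,j} \neq \tilde\Phi_j(x_0)$ and consider the entire function $P_N(z) := g(z_1, \dots, z_k)^N (z_{k+j} - \tilde\Phi_j(x_0))$. Then $|P_N(w)| = |w_{2,j} - \tilde\Phi_j(x_0)|$ is a fixed positive constant, while on $K$ the norm $\|P_N\|_K$ splits into a near-$x_0$ piece controlled by continuity of $\tilde\Phi$ and a far-from-$x_0$ piece killed by $|g|^N \to 0$. For $\delta$ so small that $|\tilde\Phi_j(x) - \tilde\Phi_j(x_0)| < |w_{2,j} - \tilde\Phi_j(x_0)|/2$ whenever $|x - x_0| < \delta$, and then $N$ sufficiently large, one gets $\|P_N\|_K < |P_N(w)|$.

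The main obstacle is establishing the peaking estimate in case (ii): one must verify that, despite the perturbation $\tilde\Psi - \Psi$, the Gaussian $g$ with basepoint $z' = \tilde\Psi(x_0)$ does attain its maximum on $\tilde\Psi(\overline B)$ precisely at $\tilde\Psi(x_0)$, so that $|P_N(w)|$ is not forced down by a renormalization. This is where the smallness of $\epsilon$ in Proposition \ref{locallycvx} is essential, guaranteeing that the effective Lipschitz constant $\|\tilde b\|_{\mathrm{Lip}} \leq \alpha + O(\epsilon)$ of the imaginary-part parametrization of $\tilde\Psi$ remains strictly below $1$.
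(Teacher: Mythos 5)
Your proof is correct and takes a genuinely more explicit route than the paper. The paper's proof is a one-liner: project $K$ by the holomorphic $\pi_k$ onto $S=\tilde\Psi(\overline B_{1/2})$, note that $S$ is a polynomially convex totally real set so every point of $S$ is a peak point for $P(S)$, and cite \cite{MWO} for the abstract lemma that a compact set projecting injectively onto such an $S$ is itself polynomially convex. You unwind that citation in place: case~(i) handles $\pi_k(w)\notin S$ by pulling back a separating function from $\mathbb C^k$, and case~(ii) is exactly the classical proof of the fiber-peaking lemma --- peak at $\pi_k(w)$ with the gaussian $g$, multiply by the linear factor $z_{k+j}-\tilde\Phi_j(x_0)$, and take $g^N$ with $N$ large. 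Your estimate for $\|P_N\|_K$ is correct, and the ``main obstacle'' you flag (the perturbed Lipschitz constant must stay below $1$ so $g$ genuinely peaks at $\tilde\Psi(x_0)$) is precisely what the smallness of $\epsilon$ in Proposition~\ref{locallycvx} guarantees. What you gain is independence from an external lemma; what it costs is the case analysis in (i), and there is one imprecision there: you explicitly treat $w_1\notin\tilde\Psi(\overline B)$ with $\pi_x(w_1)\in\pi_x(S)$ and $w_1\in\tilde\Psi(\overline B\setminus\overline B_{1/2})$, but the third configuration $w_1\notin\tilde\Psi(\overline B)$ with $\pi_x(w_1)\notin\pi_x(S)$ is waved away, and a ``gaussian with adapted basepoint on $S$'' does not handle it in general (for such $w_1$ one can have $\mathrm{Re}\bigl((w_1-z')^2\bigr)>0$ for every $z'\in S$). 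The clean fix is to record first that $\tilde\Psi(\overline B)$ itself is polynomially convex (the H\"ormander--Wermer Lipschitz-graph argument that Proposition~\ref{locallycvx} encodes), so $\widehat S\subset\tilde\Psi(\overline B)$ and the third configuration simply cannot occur for $w_1\in\widehat S$; the paper has the same unstated step when it asserts $S$ is polynomially convex, so this is a shared elision rather than a defect particular to your argument.
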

\begin{proof}
Let $\pi_k$ denote the projection onto $\mathbb C^k$.  Now $\pi_k$ is 
an entire map which maps $K$ onto a polynomially convex totally 
real manifold $S\subset\mathbb C^k$.  Since each point of $S$ is then 
a peak point for the algebra $P(S)$ it follows that $K$ is polynomially convex (see \cite{MWO}).
\end{proof}

\begin{corollary}\label{hullnotinsmallnbh}
Let $K\subset\mathbb C^n$ be a compact set, and let $M\subset\mathbb C^n$
be a compact totally real set. Then for any open neighborhood $U$ of $K$
there exist $\beta,\epsilon>0$ such that the following hold: 
\begin{itemize}
\item[i)] If $S\subset M$ is any closed set, and if $h(K\cup S)\subset (K\cup S)(\beta)$,
then $h(K\cup S)\subset U$, and
\item[ii)] if $M_\epsilon$ is a $\mathcal C^1$-$\epsilon$-pertubation of $M$, and if 
$S\subset M_\epsilon$ is closed, then i) still holds.  
\end{itemize}

\end{corollary}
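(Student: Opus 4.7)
The plan is a compactness/contradiction argument combined with local peak-function constructions from Proposition \ref{locallycvx} and Corollary \ref{locallycvx2}. Assume the corollary fails; extract sequences $\beta_n,\epsilon_n\to 0$, closed $S_n\subset M_{\epsilon_n}$, and points $z_n\in h(K\cup S_n)\setminus U$ with $\mathrm{dist}(z_n,K\cup S_n)<\beta_n$. Choose $\beta_0>0$ with $K(\beta_0)\subset U$; then $z_n\notin U$ forces $\mathrm{dist}(z_n,K)\geq\beta_0$, so $\mathrm{dist}(z_n,S_n)<\beta_n$ and there are $x_n\in S_n$ with $|z_n-x_n|<\beta_n$. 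By compactness of $M$ and the $\mathcal C^1$-convergence $M_{\epsilon_n}\to M$, a subsequence satisfies $x_n,z_n\to x_\infty\in M\setminus U$.

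Cover $M\setminus U$ by finitely many open sets $V_1,\dots,V_N$ disjoint from $K(\beta_0/2)$, each equipped with a local graph chart so that the local piece $L_j:=(\Psi_j,\Phi_j)(\overline{B^j_{1/2}})$ is polynomially convex by Corollary \ref{locallycvx2}; the perturbation stability supplies a uniform $\epsilon_0>0$ such that $L_j^{(\epsilon_n)}$ remains polynomially convex for $\epsilon_n<\epsilon_0$. For large $n$, $x_n\in V_{j_0}$ for some $j_0$. Applying Proposition \ref{locallycvx} via the projection $\pi_k$ yields an entire function $g_n$ with $g_n(z_n)=1$ and $|g_n|\leq 1-\eta_n$ on $L_{j_0}^{(\epsilon_n)}$, where $\eta_n>0$ depends on the transverse separation $\|\mathrm{Im}(z_n-x_n)\|$, plus $|g_n|<1$ on a uniform $\delta$-tube about $\partial L_{j_0}^{(\epsilon_n)}$.

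The main obstacle is twofold. First, the local gap $\eta_n$ degenerates as $\beta_n\to 0$, so raw powers of $g_n$ do not produce a uniform bound below $1$. Second, $g_n$ may grow unboundedly on $K$ and on parts of $S_n$ lying in other charts. My plan to overcome both is to (a) first reduce to $K$ polynomially convex with $\widehat K\subset U$, using that the hypothesis $h(K\cup S)\subset (K\cup S)(\beta)$ yields $\widehat K\setminus (K\cup S)\subset (K\cup S)(\beta)$ and, after refining $\beta_0$ against the separation of $\widehat K\setminus K$ from $M\setminus U$, to replace $K$ by $\widehat K$; and (b) apply Kallin's lemma to $\widehat K\cup L_{j_0}^{(\epsilon_n)}$—feasible since these are disjoint polynomially convex compacts admitting a linear separating polynomial once the chart is made small enough—to upgrade $g_n$ to a polynomial peak for $z_n$ over the union. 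Parts of $S_n$ in other charts are then suppressed by multiplying analogously built peak functions with balanced high powers, which is combinatorially feasible because the cover is finite and the constants $\delta,\epsilon_0$ are uniform in $n$.

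The hardest step I anticipate is extracting a non-degenerate quantitative estimate from $g_n$ near $x_\infty$ (to counteract $\eta_n\to 0$), in tandem with the simultaneous Kallin separation across the finite cover. This likely requires a rescaling argument normalized by $\beta_n$, or alternatively invoking a direct localization principle for polynomially convex hulls near compact totally real manifolds that bypasses the explicit peak-function construction; the explicit route via Proposition \ref{locallycvx} stays closest to the tools already established in the paper.
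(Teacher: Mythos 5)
Your contradiction/compactness set-up is a reasonable reduction, and your local ingredients (Proposition \ref{locallycvx}, Corollary \ref{locallycvx2}) are the right ones, but the plan for assembling the local peak functions into a global separation does not close, and you have in effect flagged this yourself. The real obstruction is not the degenerating gap $\eta_n$ (for a fixed $n$ you could always compensate by taking a high power of $g_n$, since $g_n(z_n)=1>1-\eta_n$); it is that $g_n$ has no growth control on $K$ and on the parts of $S_n$ lying in the other charts. Kallin's lemma applied to $\widehat K\cup L_{j_0}^{(\epsilon_n)}$ only tells you that this union is polynomially convex; it does not produce a function that is simultaneously $\leq 1$ on all of $K\cup S_n$ and $=1$ at $z_n$, because the pieces of $S_n$ in the other charts remain uncontrolled. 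Multiplying by ``analogously built'' peak functions from other charts and balancing the exponents cannot repair this: each auxiliary peak function is small only on its own chart and may be enormous at $z_n$ and on $L_{j_0}$, and there is no a priori way to choose finitely many exponents that keep all these competing growths in check while preserving a strict inequality at $z_n$.

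The paper avoids this entirely by arguing directly rather than by contradiction, and by replacing the ``multiply many peak functions'' idea with a Cousin problem with sup-norm estimates. The key observation is that the hypothesis $h(K\cup S)\subset (K\cup S)(\beta)$ supplies a Runge and Stein neighborhood $\Omega\subset (K\cup S)(\beta)$ of $K\cup S$. One then covers $\Omega$ by two pieces: $\tilde U_1$ close to the fixed point $x\in M\setminus K$ and $\tilde U_2$ away from it, arranged so that the overlap lies in a thin tube around $bB_{r_2}\cap M$ where the local peak function $f$ satisfies $|f|<1$. Treating $f^m$ as an additive Cousin datum on $\tilde U_1\cap\tilde U_2$ and solving with sup-norm bounds produces a single function, holomorphic on all of $\Omega$ (hence approximable by polynomials, since $\Omega$ is Runge), that peaks at $z_0$ and is $<1$ on all of $K\cup S$ --- without ever needing to control $f$ on distant parts of $K\cup S$, because those are covered by $\tilde U_2$ where one simply subtracts off a small correction. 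This is precisely the ``direct localization principle'' you suspected was needed; your blind proposal correctly identifies its necessity but does not supply it, so there is a genuine gap.
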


\begin{proof}
To show i) it is suffices by compactness to show that 
for any point $x\in M\setminus K$ there exist an $r>0$
such that if $\alpha$ is small enough,  then if 
$S\subset M$ is any closed set with $x\in S$, and 
if $h(K\cup S)\subset (K\cup S)(\alpha)$, then 
$h(K\cup S)\cap B_r(x)=\emptyset$.  The argument 
we give will make it clear that this is stable under small 
perturbations of $M$.  \

Fix $x\in M\setminus K$.  By scaling there exist $0<r_1<r_2<<1$ and $\delta>0$ such 
that for any $z_0\in B_{r_1}(x)\setminus M$ there exists 
an entire function $f$ with the following properties
\begin{itemize}
\item[a)] $f(z_0)=1$
\item[b)] $\|f(z)\|<1$ for all $z\in M\cap B_{r_2}(x)$, and 
\item[c)] $\|f(z)\|<1$ for all $z$ with $dist(z,bB_{r_2}(x)\cap M)<\delta$.
\end{itemize}
Now choose $\beta$ so small that if we define 
\begin{itemize}
\item[d)] $U_1:=(K\cup M)(\beta)\cap B_{r_2}(x)$, and 
\item[e)] $U_2:=[(K\cup M)(\beta)\setminus B_{r_2}(x)]\cup [(K\cup M)(\beta)\cap (bB_{r_2}\cap M)(\delta)]$
\end{itemize}
then $\{U_1,U_2\}$ is an open cover of $(K\cup M)(\beta)$ with $U_1\cap U_2\subset (bB_{r_2}\cap M)(\delta)$.  Note that a function $f$ as above will satisfy

\begin{itemize}
\item[f)] $\|f(z)\|<1$ for all $z\in U_1\cap U_2$.
\end{itemize}

Now let $S\subset M$ be any closed set with $x\in S$.   Then $(K\cup S)(\beta)\subset (K\cup M)(\beta)$.  If $h(K\cup S)\subset (K\cup S)(\beta)$ there exists a Runge and Stein 
neighborhood $\Omega\subset (K\cup S)(\beta)$ of $K\cup S$.  We define 
$\tilde U_1=U_1\cap\Omega$ and $\tilde U_2=U_2\cap\Omega$.   Then $\{\tilde U_1,\tilde U_2\}$
is an open cover of $\Omega$.  For any point $x\in B_{r_1}(x)\setminus M$ we 
let $f$ be a function as above.  Regarding $f^m$ as a cocycle  on $\tilde U_1\cap\tilde U_2$
we solve Cousin problems with sup-norm estimates and get by f) holomorphic
functions on $\Omega$ that separate $x$ from $K\cup S$.  \

We now have that $h(K\cup S)\cap B_{r_1}(x)\subset M$ which implies
our claim, since totally real points on polynomially convex compact 
sets are peak points. \

The result is stable of small perturbations of $M$ since the sizes of $r_1,r_2$ and $\delta$
are stable.  

\end{proof}

Fix an open neighborhood $W\subset\mathbb C$ of $I=[0,1]$.  We will now consider 
subvarieties $\Sigma$ and $Z$ of $W\times\mathbb C^{n}$, and 
by $\Sigma_t$ and $Z_t$ respectively, we will mean the fibers 
over a points $t\in W$.

\begin{lemma}\label{push}
Let $M\subset\mathbb C^n$ be a compact $\mathcal C^k$-smooth manifold (possibly with boundary)
of real dimension $m<n$, 
and let $f:[0,1]\times M\rightarrow\mathbb C^n$ be a $\mathcal C^k$-smooth 
isotopy of embeddings such that $f_0=id$ and assume that $f_t(M)$
is totally real for each $t\in [0,1]$.  Then there exists a $\delta>0$ such that 
the following hold: for any point $x\in M$ and (relatively) 
open sets $U\subset\subset V\subset B_\delta(x)\cap M$, 
any variety $\Sigma\subset W\times\mathbb C^n$, 
and any $\epsilon>0$, there exists a hypersurface $Z \subset  W\times\mathbb C^n$, and a $\mathcal C^k$-smooth
isotopy $g:[0,1]\times M\rightarrow\mathbb C^n$
such that 
\begin{itemize}
\item[i)] $|g_t(x)-f_t(x)|_{k,x}<\epsilon$ for all $x\in M, t\in [0,1]$, 
\item[ii)] $g_t(y)=f_t(y)$ for all $y\in M\setminus V, t\in [0,1]$, 
\item[iii)] $g_t(U\cap M)\subset Z_t$ for all $t\in [0,1]$, and
\item[iv)] $dim(Z\cap\Sigma)<dim(\Sigma)$.
\end{itemize}
\end{lemma}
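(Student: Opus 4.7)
The plan is to build $Z$ as the zero set of a polynomial in $z$ whose coefficients are holomorphic in $t\in W$, chosen so that its fibers closely approximate a smoothly varying local complex hypersurface containing $f_t(V\cap M)$; one then deforms $f_t$ onto $Z_t$ over $U$ by a small cutoff-supported correction. The universal $\delta>0$ is produced by compactness of $M$ and smoothness of the isotopy so that for every $(t,x)\in I\times M$ the local piece $f_t(B_\delta(x)\cap M)$ is a totally real graph over a small disc in the tangent $m$-plane $f_t(x)+(df_t)_x(T_xM)$; its complex span has complex dimension $m<n$ and is therefore contained in some complex hyperplane of $\mathbb{C}^n$ depending smoothly on $(t,x)$.

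Fix $x\in M$ and open sets $U\Subset V\subset B_\delta(x)\cap M$. Using a smoothly $t$-dependent holomorphic coordinate chart near $f_t(x)$ in which $f_t(V\cap M)$ appears as a totally real graph lying inside a coordinate hyperplane, one obtains a smooth-in-$t$ family of complex hypersurfaces $\widetilde Z_t\subset\mathbb{C}^n$ with defining function $\widetilde h_t(z)$ holomorphic in $z$, smooth in $t$, and satisfying $\widetilde h_t|_{f_t(V\cap M)}\equiv 0$. I then approximate $\widetilde h_t$ uniformly in $\mathcal C^k$-norm by a function $H(t,z)$ that is polynomial in $z$ with coefficients holomorphic in $t\in W$, hence extending globally to $W\times\mathbb{C}^n$: the $z$-direction uses Runge/polynomial approximation on the polynomially convex compact $f_t(V\cap M)$ (polynomial convexity for small $\delta$ follows from Corollary~\ref{locallycvx2}), and the $t$-direction uses that $I$ is a compact totally real subset of $W\subset\mathbb{C}$, so smooth $t$-families of holomorphic functions can be approximated in $\mathcal C^k$ by functions holomorphic on $W$ (Whitney-type approximation). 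Set $Z:=\{H=0\}\subset W\times\mathbb{C}^n$.

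To produce $g_t$, choose a smooth cutoff $\chi:M\to[0,1]$ with $\chi|_U\equiv 1$ and $\mathrm{supp}\,\chi\subset V$, and a unit vector field $\nu_t$ transverse to $\widetilde Z_t$ near $f_t(V\cap M)$. Since $\widetilde h_t(f_t(y))=0$ and $H\approx\widetilde h_t$ in $\mathcal C^k$, the function $|H(t,f_t(y))|$ is uniformly small in $\mathcal C^k$, and the implicit function theorem produces a small smooth scalar $\eta(t,y)$ satisfying $H(t,f_t(y)+\eta(t,y)\,\nu_t(f_t(y)))=0$. Define
\[
g_t(y):=f_t(y)+\chi(y)\,\eta(t,y)\,\nu_t(f_t(y)).
\]
By construction $g_t\equiv f_t$ on $M\setminus V$ and $g_t(U\cap M)\subset Z_t$, verifying (ii) and (iii); the $\mathcal C^k$-size of the perturbation is controlled by the approximation error and can be made $<\epsilon$ by sharpening the approximation, giving (i).

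For (iv) I exploit the remaining freedom in $H$: by adding to $H$ a small generic polynomial term, arbitrarily small in $\mathcal C^k$, we can arrange that $Z$ contains no top-dimensional irreducible component of $\Sigma$, whence $Z\cap\Sigma$ is a proper analytic subset of $\Sigma$ and $\dim(Z\cap\Sigma)<\dim\Sigma$. The main obstacle is the simultaneous $\mathcal C^k$-approximation in both $t$ and $z$: producing a single global holomorphic hypersurface of $W\times\mathbb{C}^n$ that accurately shadows the smoothly-$t$-varying local family $\{\widetilde Z_t\}_{t\in I}$. This is where the technical work concentrates, and it is handled by combining polynomial convexity of $f_t(V\cap M)$ for small $\delta$ with total-reality of $I\subset W$.
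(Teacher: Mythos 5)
Your construction of $\widetilde Z_t$ contains a fatal gap: you assert that for each $t$ there is a complex hypersurface $\widetilde Z_t=\{\widetilde h_t=0\}$ with $\widetilde h_t$ \emph{holomorphic} in $z$ and vanishing identically on $f_t(V\cap M)$. For a merely $\mathcal C^k$-smooth totally real manifold this is false in general. Take $M=\{(x,\phi(x),0,\dots,0):x\in(-1,1)\}$ with $\phi$ real-valued, smooth, but nowhere real-analytic; if a holomorphic $h\not\equiv 0$ vanishes on this curve, restricting to the $(z_1,z_2)$-plane and applying Weierstrass preparation forces $\phi$ to coincide near each non-branch point with a branch of a holomorphic function restricted to $\mathbb R$, hence to be real-analytic, a contradiction. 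The fact that the complex span of $T_{f_t(x)}f_t(M)$ lies in a complex hyperplane is a statement about the tangent space at a single point; it does not place the \emph{manifold} in a complex analytic variety. Every downstream step inherits the gap: Runge approximation of $\widetilde h_t$ presupposes $\widetilde h_t$ holomorphic near the compact, and the implicit-function push onto $Z_t$ needs $|H(t,f_t(y))|$ small, which you only get because you posited the vanishing of $\widetilde h_t$ on $f_t(V\cap M)$.

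The paper sidesteps this entirely by never asking $Z_t$ to contain $f_t(V\cap M)$: choosing a parametrization $\phi:\overline B\subset\mathbb R^m\to M$ near $x$, it approximates $f_t\circ\phi$, in $\mathcal C^k$ and holomorphically in $t$, by a family $F_t$ of holomorphic \emph{automorphisms} of $\mathbb C^n$ via Forstneri\v{c}'s theorem (cf.\ Lemma~\ref{approxwithidentity}). Since $\overline B\subset\mathbb R^m\subset\mathbb C^{n-1}$, the set $Z_t:=F_t(\mathbb C^{n-1}\times\{0\})$ is a genuine complex hypersurface that $F_t$ sends $\overline B$ into by construction; one then glues $f_t$ and $\tilde g_t:=F_t\circ\phi^{-1}$ with a cutoff so that $g_t=\tilde g_t$ on $U$, and (iii) holds automatically. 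Thus the hypersurface passes through the \emph{perturbed} manifold rather than the original one, which is exactly the maneuver your argument is missing. Your route could perhaps be repaired by taking $\widetilde h_t$ to be only $\overline\partial$-flat to order $k$ along $f_t(V\cap M)$ (not holomorphic) and replacing Runge by Carleman approximation as in \cite{MWO}; but controlling the joint holomorphy in $t$ would then reproduce the content of Lemma~\ref{approxwithidentity}, so the argument would in substance rejoin the paper's.
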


\begin{proof}
We may assume that $f_t$ is defined on $\tilde M\subset\mathbb C^n$
with $M\subset\subset\tilde M$ with $f_t(\tilde M)$ totally real 
for each $t$.    If $\delta$ is small enough we it follows from Corollary \ref{locallycvx2} that 
the following hold: for each $x\in M$ there exists a parametrization 
$\phi:\overline B\rightarrow\tilde M$ of 
$\tilde M$ near $x$, $B$ is the unit ball in $\mathbb R^k$, such that 
$f_t(\phi(\overline B))$ is polynomially convex for all $t$, and 
such that $B_\delta(x)\cap\tilde M\subset\phi(B)$.  
Fix $x$ and choose a cutoff function 
$\chi$ which is identically one on $\overline U$ and compactly supported in $V$.  
We now consider $\mathbb R^k$ to be contained in $\mathbb C^{n-1}\subset\mathbb C^{n-1}\times\mathbb C$.
By  \cite{Forstneric1994} we see 
that $f_t\circ\phi$ is uniformly approximable in $\mathcal C^k$-norm on $\overline B\times I$ 
by a family $F_t$ of 
holomorphic automorphisms of $\mathbb C^n$ also holomorphic in $t\in W$ (see also Lemma \ref{approxwithidentity}). 
Set $Z:=F_t((\mathbb C^{n-1}\times\{0\})\times W)$.    By genericity we may assume that 
(iv) holds.    
Write $\tilde g_t:=F_t\circ\phi^{-1}$, and finally set 
$g_t(x):=f_t(x)+\chi(x)(\tilde g_t(x)-f_t(x))$.
\end{proof}

\emph{Proof of Proposition \ref{fixingisotopy}:} We give the proof first in the case 
of $M$ being an embedded cube $f_0:[0,1]^k\rightarrow\mathbb C^n$. 
The general case follows by covering $M$ by a locally finite 
family of cubes, and succesively using the result for cubes and gluing
(see \cite{LW} for details).
We will prove the result by induction on $k$, 
and we note that the result is obvious for $k=0$.  Assume that 
the result holds for some $k\geq 0$.   To avoid working with too 
many indices we give the argument for passing from $k=1$ to $k=2$, the
passing from $k=0$ to $k=1$ is simpler, and other cases completely similar.

Choose an open set $K\subset V\subset U'$ such that 
$\widehat{{[(K_t\cup M_t)\cap\overline V_t]}}\subset U'_t$ and 
choose an open set $K\subset U''\subset\subset V$.
For $m\in\mathbb N$ we let $\Gamma_m$ denote 
the grid $\Gamma_m=\{x\in [0,1]^2:x_1=j/m \mbox{ or } x_2=j/m, 0\leq j\leq m\}$, 
We let $Q_{ij}$ denote the cube $Q_{ij}=[i/m,(i+1)/m]\times[j/m+(j+1)/m]$.
For small $\beta>0$ we define $Q_{ij}^\beta=[i/m+\beta,(i+1)/m-\beta]\times[j/m+\beta,(j+1)/m-\beta]$
and $\Gamma_m(\beta)=[0,1]^2\setminus\cup_{ij} Q_{ij}^\beta$.  
If $m$ is large enough then if $f_t(Q_{ij})$ is not contained in $V_t$
then $f_t(Q_{ij})$ does not intersect $U_t''$.  
If $m$ large enough then if $S$ is any collection of $n+1$ cubes $Q_{ij}$
then $\mathrm{h}(((K_t\cup M_t)\cap\overline V_t)\cup S_t)\subset U_t'$. 
Note that this still holds if we replace $f_t$ by a small $\mathcal C^1$-perturbation. 
Now by the induction hypothesis we may (by possibly having to perturb $f_t$ slightly)
assume that for any collection of $n+1$ cubes we also have that 
$\mathrm{h}(((K_t\cup M_t)\cap\overline V_t\cup\Gamma_{m,t})\cup S_t)\subset U_t'$
For this we successively use the induction hypothesis and creating 
the grid by attaching a 1-cube to collections of cubes $Q_{ij}$, perturbing the isotopy each time.  
Finally by choosing a small enough $\beta$ we may assume that 
$\mathrm{h}(((K_t\cup M_t)\cap\overline V_t\cup\Gamma_{m,t}(\beta))\cup S_t)\subset U_t'$ (use 
Corollary \ref{hullnotinsmallnbh}).  Finally by Lemma \ref{push}
we may assume that there are parametrized  subvarieties $Z_{i,j,t}=Z(h_{i,j,t})$
of $\mathbb C^n$ with $Q^\beta_{i,j,t}\subset Z_{i,j,t}$ 
for all cubes not completely contained in $V_t$, 
and such that 
for a fixed $t$, any collection of $n+2$ subvarieties with 
distinct indices intersect empty.  

Now fix $t\in [0,1]$, $x\in h(K_t\cup M_t)$, and let $\mu$ be a 
representative Jensen measure for evaluation at $x$.  
Then 
$$
\log|h_{i,j,t}(x)|\leq\int_{K_t\cup M}\log|h_{i,j,t}|d\mu, 
$$
and so if $\mu$ has mass on $Q^\beta_{i,j,t}$ then 
$x\in Z_{i,j,t}$.  So $\mu$ has mass on at most $n+1$ cubes 
together with $\Gamma_{m,t}(\beta)$ and $(K_t\cup M_t)\cap\overline V_t$.  So 
$x\in U_t'$.

\section{Extensions of maps from totally real manifolds}

In this section we show how to obtain the conditions of Theorem \ref{FRCarleman}
starting with embeddings defined only \emph{on} the manifold $M$.   
Our approach is the same as  that of Forstneri\v{c}
and Rosay in \cite{FR} and Forstneri\v{c} \cite{Forstneric1994} but the presence of an additional compact set $K$ complicates things.

\begin{theorem}\label{mainextension}
Let $M\subset\mathbb C^n$ be a compact totally real manifold of 
class $\mathcal C^\infty$ (possibly with boundary) and let $K\subset\mathbb C^n$
be a polynomially convex compact set.  Let $U$ be an open neighborhood of $K$ and 
let $\varphi: [0,1]\times (U\cup M)\rightarrow\mathbb C^n$ 
be a $\mathcal C^\infty$-smooth map such that 
$\varphi_t|_M$ is an embedding 
and $\varphi_t|_U$ is injective holomorphic for 
each fixed $t$.   Assume also that $\varphi_t(K)$ is polynomially convex for each $t.$
Then there exists an (arbitrarily small ) open neighborhood $U'$ of $K$ such 
that for any $\epsilon>0$ and any $k\in\mathbb N$ there exists a $\mathcal C^\infty$-smooth map 
$\Phi: [0,1]\times\mathbb C^n\rightarrow\mathbb C^n$  such that the following hold:
\begin{itemize}
\item[(a)] $\|\Phi_t-\varphi_t\|_{U'}<\epsilon$, 
\item[(b)] $|\Phi_t-\varphi_t |_{x,k}<\epsilon$, $ x \in M\cap U$
\item[(c)] $\Phi_t|_{U'}$ is holomorphic,
\item[(d)] $\Phi_t\in\mathcal H_k(\mathbb C^n,M)$, 
\item[(e)] $\Phi_t(x)=\varphi_t(x)$ for all $x\in M\setminus U$, and 
\item[(f)] $\Phi_t$ is of maximal rank along $M$ 
\end{itemize}
for all $t\in [0,1]$.
\end{theorem}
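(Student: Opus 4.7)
The plan is to build $\Phi_t$ by gluing the already-holomorphic map $\varphi_t$ on a small neighborhood of $K$ with an almost-holomorphic extension $F_t$ of $\varphi_t|_M$ on a tubular neighborhood of $M$, arranged so that the two pieces differ by a function flat on $M$ on their overlap. Conditions (a)--(f) will then fall out of a single cutoff formula.

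\emph{Step 1 (almost-holomorphic extension along $M$).} Construct a $\mathcal C^\infty$-smooth family $F\colon[0,1]\times V\to\C^n$, with $V$ an open tubular neighborhood of $M$, satisfying (i) $F_t|_M=\varphi_t|_M$, (ii) $\overline\partial F_t$ vanishes to infinite order on $M$, (iii) $F_t$ has maximal real rank along $M$, and (iv) $F_t-\varphi_t$ vanishes to infinite order on $M\cap U$. This is the standard Whitney/Borel-type construction: prescribe a formal holomorphic jet $\sum_\alpha\tfrac{1}{\alpha!}J^\alpha_t(p)(z-p)^\alpha$ at each $p\in M$ in directions adapted to the totally real structure, and realize it by a $\mathcal C^\infty$ function via Borel's theorem, which delivers (i) and (ii). For $p\in M\cap U$ take $J^\alpha_t(p)=\partial^\alpha\varphi_t(p)$, the honest holomorphic jet of $\varphi_t$ at $p$; this forces (iv), because $\varphi_t$ is then itself a smooth realization of the same jet on a neighborhood of $p$ inside $U$. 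Outside $U$ the jets are chosen to smoothly extend those on $M\cap U$; the first-order transverse part is picked, using a smooth section of the bundle of complex complements of $\C\cdot T_pM$ in $T_p\C^n$, so that the resulting $dF_t|_p$ is an injective complex-linear map on each $T_p\C^n$, yielding (iii). The local pieces are assembled by a partition of unity on $M$, preserving all four conditions and smoothness in $t$.

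\emph{Step 2 (gluing).} Fix an open set $K\subset U'\Subset U$ and a smooth cutoff $\chi\colon\C^n\to[0,1]$ with $\chi\equiv 1$ on $U'$ and $\mathrm{supp}(\chi)\subset U$. Define
\begin{equation*}
\Phi_t(z)\;:=\;\chi(z)\,\varphi_t(z)+(1-\chi(z))\,F_t(z)
\end{equation*}
on $U\cup V$, and extend smoothly to all of $\C^n$ by multiplication with a further cutoff supported in $U\cup V$ (the values of $\Phi_t$ away from $K\cup M$ are not constrained by the conclusion). On $U'$ one has $\Phi_t=\varphi_t$ exactly, giving (a) and (c). On $M\setminus U$ one has $\chi\equiv 0$ and $F_t|_M=\varphi_t|_M$ by (i), giving (e). On $M\cap U$, $\Phi_t-\varphi_t=(1-\chi)(F_t-\varphi_t)$ is flat on $M$ by (iv), giving (b). For (d),
\begin{equation*}
\overline\partial\Phi_t\;=\;\overline\partial\chi\cdot(\varphi_t-F_t)+(1-\chi)\,\overline\partial F_t,
\end{equation*}
and both summands are flat on $M$ (the first because $\mathrm{supp}(\overline\partial\chi)\cap M\subset M\cap U$ combined with (iv), the second by (ii)); hence $\Phi_t\in\cH_k(\C^n,M)$ for every $k$. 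Finally, (iv) forces $dF_t=d\varphi_t$ on $M\cap U$, so $d\Phi_t|_{M\cap U}=d\varphi_t$ is a complex-linear isomorphism, while on $M\setminus U$ we have $d\Phi_t|_M=dF_t$, injective by (iii); this yields (f).

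\emph{Main obstacle.} The real content sits in Step 1, above all in condition (iii) when $\dim_\R M<n$: here one has genuine freedom in the normal complex directions and must make a smooth global choice compatible with the already-fixed holomorphic jet of $\varphi_t$ on $M\cap U$. This is handled by a standard partition-of-unity argument producing a smooth section of the Grassmannian bundle of complex complements to $\C\cdot T_pM$ over $M$, together with a choice of injective transverse jet on that complement. The hypothesis that $\varphi_t(K)$ be polynomially convex is not used in this extension step; it is assumed in the statement because it will be invoked in the subsequent Carleman-type approximation results in which Theorem~\ref{mainextension} is applied.
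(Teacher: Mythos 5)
Your Step 2 (the cutoff gluing $\Phi_t=\chi\varphi_t+(1-\chi)F_t$) is correct and clean, and the verification of (a)--(f) from properties (i)--(iv) of $F_t$ is sound; in fact your formula would yield the \emph{stronger} conclusions $\Phi_t\equiv\varphi_t$ on $U'$ and $\Phi_t-\varphi_t$ flat on $M\cap U$, with no $\epsilon$-loss. That is a signal worth pausing on: the paper's $\Phi_t$ only approximates $\varphi_t$ near $K$ because its proof replaces $\varphi_t$ near $K$ by automorphisms via Anders\'en--Lempert (using the hypothesis that $\varphi_t(K)$ is polynomially convex), conjugates to reduce to the case $\varphi\equiv\mathrm{id}$ near $K$, and then invokes Lemma~\ref{extension} (an ``identity extends to identity'' version of the $\overline\partial$-flat extension). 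You bypass both of these moves and correctly observe that you never use polynomial convexity of $\varphi_t(K)$. That should make you suspicious that something in Step~1 is being taken for granted.

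The genuine gap is in Step~1, specifically in the claim that the first-order transverse jet can be produced by ``a standard partition-of-unity argument.'' Choosing a complex complement $W_p$ of $\C\cdot T_pM$ is indeed a free partition-of-unity construction (or just take the orthogonal complement). The hard part is choosing, smoothly in $(t,p)\in[0,1]\times M$, an injective $\C$-linear map $L_{t,p}\colon W_p\to\C^n$ with image transverse to $\C\cdot d\varphi_t(T_pM)$, subject to the rigid constraint $L_{t,p}=\partial\varphi_t(p)|_{W_p}$ for $p\in M\cap U$. The space of admissible $L_{t,p}$ is an open subset of $\mathrm{Hom}_\C(W_p,\C^n)$ homotopy equivalent to $GL_{n-m}(\C)$; in particular it is \emph{not convex}, so a partition of unity applied to local choices can produce non-injective maps. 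What is actually needed is an extension of a section of a fiber bundle with fiber $\simeq GL_{n-m}(\C)$ from $[0,1]\times(M\cap U)$ to $[0,1]\times M$, and that is a topological problem with possible obstructions. The paper deals with exactly this in Lemma~\ref{bundelmap}: there the given data lives over $M\times\{0\}\cup (U\times I)$ (not merely over $(M\cap U)\times I$), the inclusion into $M\times I$ is a deformation retract of CW-pairs, and Husemoller's section-extension theorem applies. To make your argument go through, you would need to supply the analogous ingredient — e.g.\ by normalizing $\varphi_0=\mathrm{id}$ (the only way Theorem~\ref{mainextension} is used downstream) so that the identity jet gives a global section at $t=0$ and the deformation retraction argument applies — or else reduce, as the paper does, to the identity case via the A--L coordinate change. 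As written, the claim that (iii) and (iv) can be met simultaneously by gluing local choices with a partition of unity is not justified and is where the real content of the theorem lies.
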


The content of the following lemma is essentially to be found in \cite{FR} - the 
difference is the claim that the identity extends to the identity. 

\begin{lemma}\label{extension}
Let $M\subset\mathbb C^n$ be a totally real manifold of class $\mathcal C^\infty$, and 
let $\varphi:[0,1]\times M\rightarrow\mathbb C^n$ be a $\mathcal C^\infty$-smooth isotopy
such that $M_t=\varphi_t(M)$ is totally real for each $t\in [0,1]$.  Let $M'\subset M$ be compact
and assume that $\varphi\equiv\mathrm{id}$ near $M'$.  Then for any $k\in\mathbb N$ there exists a $\mathcal C^\infty$-smooth 
map $\tilde\varphi:[0,1]\times\mathbb C^n\rightarrow\mathbb C^n$ such that 
$\tilde\varphi|_M=\varphi, \tilde\varphi\equiv\mathrm{id}$ near $M'$, and $\tilde\varphi_t\in\mathcal H_k(\mathbb C^n,M)$.     
\end{lemma}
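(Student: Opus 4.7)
The plan is to set $\tilde\varphi_t := \mathrm{id} + \chi \cdot \tilde\psi_t$, where $\chi$ is a smooth cutoff equal to $1$ on a small tubular neighborhood $\Omega'$ of $M$ in $\mathbb C^n$ and supported in a slightly larger one $\Omega$, and $\tilde\psi_t \in \mathcal C^\infty(\Omega, \mathbb C^n)$ is an almost-holomorphic extension of $\psi_t := \varphi_t - \mathrm{id}$ to $\Omega$ satisfying $\tilde\psi_t|_M = \psi_t$ and $\overline\partial \tilde\psi_t = O(\mathrm{dist}(\cdot, M)^k)$. On $\Omega'$ one then has $\tilde\varphi_t = \mathrm{id} + \tilde\psi_t$, so both the restriction condition $\tilde\varphi_t|_M = \varphi_t$ and the membership $\tilde\varphi_t \in \mathcal H_k(\mathbb C^n, M)$ follow; outside $\Omega$ the map equals the identity, which is trivially in $\mathcal H_k$.

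To construct $\tilde\psi_t$, I would cover $M$ by finitely many charts $U_\alpha \Subset \mathbb C^n$ in which, after a $\mathbb C$-linear change of coordinates, $M \cap U_\alpha$ is a smooth graph over its tangent space $\mathbb R^m \subset \mathbb C^m \subset \mathbb C^n$ at some reference point of the chart. Parametrize $M \cap U_\alpha$ smoothly by $x \in \mathbb R^m$, and for each $\psi_t$ define a local extension $E_\alpha(\psi_t)$ to a neighborhood by the standard almost-holomorphic Taylor polynomial construction in the directions transverse to $M$, truncated at order $k$; this yields $E_\alpha(\psi_t)|_M = \psi_t$, $\overline\partial E_\alpha(\psi_t) = O(\mathrm{dist}(\cdot, M)^k)$, and smooth dependence on $(t, z)$. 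Glue using a partition of unity $\{\chi_\alpha\}$ subordinate to $\{U_\alpha\}$ with $\sum_\alpha \chi_\alpha \equiv 1$ on $\Omega$, putting $\tilde\psi_t := \sum_\alpha \chi_\alpha E_\alpha(\psi_t)$. The crucial point for preserving $\mathcal H_k$ under the gluing is that any two local extensions $E_\alpha(\psi_t)$ and $E_\beta(\psi_t)$ agree on $M$ and both have $\overline\partial$ vanishing to order $k$ there; a defining-function argument shows they then differ by $O(\mathrm{dist}(\cdot, M)^{k+1})$, and combined with $\sum_\alpha \overline\partial\chi_\alpha = 0$ near $M$ this gives
$$\overline\partial \tilde\psi_t = \sum_\alpha (\overline\partial\chi_\alpha)(E_\alpha(\psi_t) - E_{\alpha_0}(\psi_t)) + \sum_\alpha \chi_\alpha \overline\partial E_\alpha(\psi_t) = O(\mathrm{dist}(\cdot, M)^k),$$
i.e.\ $\tilde\psi_t \in \mathcal H_k(\Omega, M)$, as needed.

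For the identity condition near $M'$, note that $\psi_t \equiv 0$ on a (relatively) open neighborhood $V$ of $M'$ in $M$, so in each chart the local Taylor polynomial $E_\alpha(\psi_t)$ is identically zero on an open subset of $U_\alpha$ containing $V$ (the Taylor polynomial of the zero function is zero). After gluing and cutting off, $\tilde\varphi_t \equiv \mathrm{id}$ on an open neighborhood of $M'$ in $\mathbb C^n$. The main obstacle throughout is really the partition-of-unity step: verifying that gluing preserves the $\mathcal H_k$ condition rests on the (standard but nontrivial) uniqueness-up-to-one-higher-order fact for almost-holomorphic extensions of a smooth function on a totally real submanifold, due to H\"ormander--Wermer.
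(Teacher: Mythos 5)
Your overall shape --- extend $\psi_t = \varphi_t - \mathrm{id}$ by a $\overline\partial$-flat jet along $M$, then cut off --- is reasonable, and the identity-near-$M'$ part is fine. The gluing step, however, rests on a claim that is false in the case at hand. You assert that two extensions of $\psi_t$ which agree on $M$ and are both $\overline\partial$-flat to order $k$ there must differ by $O(\mathrm{dist}(\cdot,M)^{k+1})$, citing H\"ormander--Wermer. That uniqueness-of-jets fact holds when $M$ is maximally totally real, $\dim_{\R}M = n$, because then $T_pM \oplus iT_pM = \C^n$ and the entire jet of a $\overline\partial$-flat extension along $M$ is forced by the restriction. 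When $\dim_{\R}M = m < n$ (the relevant case, e.g.\ $M=\R^s$ with $s<n$), only the derivatives in $T_pM\oplus iT_pM$ are forced; the derivatives in the complex normal directions are free. For instance $0$ and $z_2$ on $M=\R\subset\C^2$ are both holomorphic and both vanish on $M$, yet differ only to first order in $\mathrm{dist}(\cdot,M)$. Since your local extensions $E_\alpha$ are built from chart-dependent transverse directions, two of them can disagree already at first order, and then $\sum_\alpha(\overline\partial\chi_\alpha)(E_\alpha - E_{\alpha_0})$ is merely $O(\mathrm{dist})$, not $O(\mathrm{dist}^k)$; the glued map fails to be in $\mathcal H_k(\C^n,M)$.

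The cure is to prescribe the normal part of the jet globally and coherently before extending, and that is essentially what the paper does. It works with the complex normal bundles $N_t$ of $M_t=\varphi_t(M)$, assembles them into a bundle $N$ over $M\times[0,1]$, and uses an obstruction-theoretic bundle-extension result (Lemma \ref{bundelmap}) to produce an isomorphism from $N_0\times[0,1]$ to $N$ that is the identity over $M\times\{0\}$ and over $U\times[0,1]$ for a neighborhood $U$ of $M'$. The induced map between tubular neighborhoods of the zero sections supplies one globally defined $\overline\partial$-flat $k$-jet along $M\times[0,1]$, to which Whitney extension is applied directly, with no partition of unity and hence no cocycle to control. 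This buys two things your construction does not: (i) because the normal jet is a bundle \emph{isomorphism}, the resulting extension automatically has maximal rank along $M$, a property that the proof of Theorem \ref{mainextension} extracts from this lemma (see item (f) there) even though it is not made explicit in the statement of Lemma \ref{extension}; and (ii) the identity near $M'$ comes out of the bundle construction rather than being checked after the fact. If you want to keep the partition-of-unity flavor, you must at least fix one smooth complement to $TM^{\C}$ in $T\C^n|_M$ (e.g.\ the Hermitian orthogonal complement) and make every $E_\alpha$ use it, so the local jets agree to order $k$; but that still leaves the rank condition unaddressed, which is why the bundle-isomorphism formulation is the natural one here.
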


\begin{proof}

The maps $\varphi_t$ already have maximal rank along $M$ and the Cauchy-Riemann equations 
determine (at the level of jets) the extension in the complex tangent directions.  We need 
to extend the maps in the complex normal directions, and the extensions should be $\overline\partial$-flat. 

For each $t\in [0,1]$ let $N_t$ denote the complex normal bundle of the embedded manifold 
$M_t$.  Let $N$ denote the total bundle $N=\underset{t\in [0,1]}{\cup}N_t$, and 
let $\tilde N$ denote the bundle $N_0\times [0,1]$ over $M\times [0,1]$.

 We let $N_e \subset \C^{n+1}$ and 
$\tilde N_e\subset \C^{n+1}$ denote embedded neighborhoods of the zero sections.  These are both generic 
CR-manifolds.  Let $U$ be an open neighborhood of $M'$
such that $\varphi\equiv\mathrm{id}$ on $U$.  There is a natural bundle injection 
$f:\tilde N_0\cup\pi^{-1}(U\times [0,1])\rightarrow N$ since the two bundles are identically 
defined over $M_0\cup (U\times [0,1])$.  By Lemma \ref{bundelmap} the map 
$f$ extends to a bundle isomorphism 
$\tilde f:\tilde N\rightarrow N$, and $\tilde f$ induces CR-isomorphism $\tilde f_e:\tilde N_e\rightarrow N_e$.   
Note that $\tilde f_e$ is the identity near $M'$.   The map $\tilde f_e$ determines a jet along 
$M\times [0,1]$ which has maximal rank and is $\overline\partial$-flat to order $k-1$, and by 
Whitney's extension theorem the jet extends to a map $\tilde\varphi_t$.\end{proof}

Let $M$ be a $\mathcal C^k$-smooth manifold and let $\pi:N\rightarrow M\times I$
be a complex vector bundle of class $\mathcal C^k$ with fiber $\mathbb C^m$.  
Let $N_0$ denote
the bundle $\pi^{-1}(M\times\{0\})$ over $M$, and form the bundle $\tilde N=N_0\times I$.
Denote the projection by $\tilde\pi$.

\begin{lemma}\label{bundelmap}
Let $M' \subset M$ be a compact subset, let $U\subset M$ be an open neighborhood 
of $M'$, and let $f:\tilde N_0\cup\tilde\pi^{-1}(U\times I)\rightarrow N$
be a $\mathcal C^k$-smooth bundle map  ($\tilde\pi=\pi\circ f$) giving an isomorphism from 
$\tilde N_0\cup\tilde\pi^{-1}(U\times I)$ onto the restriction of $N$ to $M \times {0} \cup U \times I$. 
Then there exists a $\mathcal C^k$-smooth  bundle isomorphism $\tilde f:\tilde N\rightarrow N$
($\tilde\pi=\pi\circ\tilde f$) extending 
$f$ on $\tilde N_0\cup\tilde\pi^{-1}(M'\times I)$.
\end{lemma}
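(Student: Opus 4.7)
The plan is to globalize by first finding any bundle isomorphism $G : \tilde N \to N$ and then modifying it via a bundle \emph{auto}morphism of $\tilde N$ that carries the desired boundary values. Since $I$ is contractible and both $\tilde N$ and $N$ restrict to $N_0$ over $M \times \{0\}$, a $\mathcal C^k$-smooth bundle isomorphism $G : \tilde N \to N$ exists by the standard homotopy invariance of vector bundles (concretely, one uses parallel transport along the segment $s \mapsto (x, st)$ with respect to any $\mathcal C^k$-smooth connection on $N$). The problem is thus reduced to extending the bundle automorphism $h := G^{-1} \circ f$ of $\tilde N|_A$ (where $A := \tilde N_0 \cup \tilde\pi^{-1}(U \times I)$) to a bundle automorphism of all of $\tilde N$ which still equals $h$ on $\tilde N_0 \cup \tilde\pi^{-1}(M' \times I)$.

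Writing points of $\tilde N = N_0 \times I$ as $(v, s)$ and using that a bundle automorphism covers the identity, we have $h(v, s) = (h_s(v), s)$, where $h_0$ is defined on all of $N_0$ and $h_s$ is defined on $N_0|_U$ for $s \in I$. The key trick is a \emph{time reparametrization}: choose a $\mathcal C^k$-smooth cutoff $\chi : M \to [0, 1]$ equal to $1$ on an open neighborhood of $M'$ with $\mathrm{supp}(\chi) \subset U$, and define
\[
\tilde h(v, t) := \bigl( h_{\chi(\pi(v))\, t}(v),\; t \bigr).
\]
When $\pi(v) \in U$ the argument $(v, \chi(\pi(v)) t)$ lies in $\tilde\pi^{-1}(U \times I) \subset A$, while when $\pi(v) \in M \setminus \mathrm{supp}(\chi)$ the exponent is $0$ and we read off the globally defined $h_0(v)$. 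These two cases cover $\tilde N$ by open pieces which agree on overlap (both give $(h_0(v), t)$ there), so $\tilde h$ is a $\mathcal C^k$-smooth bundle automorphism of $\tilde N$. Setting $\tilde f := G \circ \tilde h$ yields the required extension: at $t = 0$ the exponent vanishes, while on $\tilde\pi^{-1}(M' \times I)$ we have $\chi \equiv 1$, so in both cases $\tilde h = h$ and hence $\tilde f = f$.

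The only conceptual input is the existence of the global isomorphism $G$ in the first step; without it one would be forced to interpolate inside the non-convex group $GL_m(\mathbb C)$, which is the usual source of trouble in such extension problems. Once $G$ is available, the time-reparametrization trick bypasses any such interpolation entirely by moving along the \emph{existing} one-parameter family $s \mapsto h_s(v)$ rather than synthesizing a new path in the automorphism group.
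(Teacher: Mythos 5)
Your proof is correct, and it takes a genuinely different route from the paper's. The paper reduces the problem to extending a continuous section of an auxiliary $GL_m(\mathbb C)$-fibre bundle over the relative CW-pair $(M\times I,\, M\times\{0\}\cup Y\times I)$, invokes a section-extension theorem from Husemoller, and then smooths; in other words it outsources the difficulty to classical fibre-bundle obstruction theory. You instead produce the extension by hand: you first trivialize the $I$-direction via a global isomorphism $G:\tilde N\to N$ (homotopy invariance of vector bundles over $M\times I$), reducing to extending the bundle automorphism $h=G^{-1}\circ f$ of $\tilde N|_A$, and then the time-reparametrization $\tilde h(v,t)=(h_{\chi(\pi(v))t}(v),t)$ does this \emph{explicitly} without ever having to interpolate in $GL_m(\mathbb C)$ — the cutoff acts on the $I$-parameter rather than on matrix entries, which is precisely what sidesteps the non-convexity. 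The checks (well-definedness on the two charts $U$ and $M\setminus\mathrm{supp}\,\chi$, agreement with $h$ at $t=0$ and over $M'$, invertibility of each fibre map) all go through, and $\tilde f:=G\circ\tilde h$ restricts to $f$ on $\tilde N_0\cup\tilde\pi^{-1}(M'\times I)$ as required. What your argument buys is elementariness and explicitness — no obstruction theory, no separate smoothing step for the extension itself, and it is clear that it works in the $\mathcal C^k$ category once $G$ is $\mathcal C^k$ (this is where a small regularity remark is worth adding: the partition-of-unity proof of homotopy invariance, rather than the parallel-transport one, gives $G$ of class $\mathcal C^k$ directly without losing a derivative through the connection). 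What the paper's route buys is that it is the stock reference argument and requires no clever reparametrization; but your proof is, if anything, cleaner.
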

\begin{proof}
If $\{U_j\}$ is an open cover of $M\times I$ over which 
the bundles are trivial then $\tilde N$ is represented by a family
$g_{ij}:U_j\rightarrow GL_m(\mathbb C)$ of $\mathcal C^k$-smooth maps 
(transitions from $\pi^{-1}(U_j)$ to $\pi^{-1}(U_i)$), and $N$ is likewise 
represented by a family $h_{ij}:U_{ij}\rightarrow GL_m(\mathbb C)$.   Finding 
an isomorphism $\tilde f:\tilde N\rightarrow N$ amounts to finding local 
maps $\tilde f_j:U_j\rightarrow GL_m(\mathbb C)$ such that 
\begin{itemize}
\item[(a)] $\tilde f_i=h_{ij}\circ\tilde f_j\circ g_{ji}$ for all $U_i\cap U_j\neq\emptyset$.
\end{itemize}
We may think of such an $\tilde f$ as a section of the $\mathcal C^k$-smooth 
$GL_m(\mathbb C)$ fiber bundle $\pi_x:X\rightarrow M\times I$ where the matrices transform 
according to the rule $A\mapsto h_{ij}\circ A\circ g_{ji}$.   Our given bundle injection $f$
is then interpreted as a section of $\pi_x^{-1}(M_0\cup (U\times I))$.  Choose a closed set 
$Y\subset U$ such that  $M'\subset Y^{\circ}$ and such that $(M,Y)$ is a relative CW-complex.  According to
Theorem 7.1 in  \cite{Hu} the section $f$ extends to a section $\tilde f$ of the total bundle $X$. 
By smoothing we may assume that $\tilde f$ is actually a $\mathcal C^k$-smooth section.  
\end{proof}

\emph{Proof of Theorem \ref{mainextension}:}

Choose open subsets $U_j$ in $\mathbb C^n$ for $j=1,2,3$ such that 
$K\subset U_3\subset\subset U_2\subset\subset U_1\subset\subset U$.
The set $U_3$ will play the role of $U'$ in the theorem.  

Note that if $\varphi\equiv\mathrm{id}$ on $U_2$ then 
the theorem follows immediately from Lemma \ref{extension}
by defining $M'=M\cap\overline{U_3}$.  To prove the theorem 
we will use a global holomorphic change of coordinates  
so that we are approximately in this situation.  

By possibly having to choose a smaller $U$ we may assume 
that $\varphi$ is the uniform limit of one parameter families 
$\psi_t\in\mathrm{Aut_{hol}}(\mathbb C^n)$, \emph{i.e.}, we 
may assume that $\psi^\delta_t\rightarrow\varphi_t$ uniformly 
on $[0,1]\times\overline U$ as $\delta\rightarrow 0$.    
Note that the Cauchy-estimates imply: 
\begin{itemize}
\item[(a)] If $\vartheta^\delta_t$ is close enough to the identity in $\mathcal C^k$-norm 
on $\overline{U_3}\cup (M\cap\overline{U_1})$ and if $\psi_t^\delta$  is 
close enough to $\varphi_t$ on $\overline U$, then $\psi^\delta_t\circ\vartheta_t^\delta$
is close to $\varphi_t$ in $\mathcal C^k$-norm on $\overline{U_3}\cup (M\cap\overline{U_1})$.
\end{itemize}

Define $\theta_t^\delta:=(\psi_t^\delta)^{-1}\circ\varphi_t$.  Then $\theta_t^\delta$
converges to the identity uniformly in $\mathcal C^k$-norm as $\delta\rightarrow 0$
on $\overline{U_1}$.   Let $\chi\in\mathcal C^k_0(U_1)$ such 
that $\chi_1\equiv 1$ near $\overline{U_2}$.
 
Write $\theta^\delta_t=\mathrm{id}+\sigma^\delta_t$ and define 
$\tilde\theta^\delta_t:=\mathrm{id}+(1-\chi_1)\cdot\sigma^\delta_t$.  
Then $\tilde\theta^\delta_t\rightarrow\mathrm{id}$ uniformly in $\mathcal C^k$-norm 
on $M\cap\overline U_1$ as $\delta\rightarrow 0$,  $\tilde\theta^\delta_t$ is the identity on $M\cap\overline{U_2}$
and $\tilde\theta^\delta_t=\theta^\delta_t$ outside $U_1$.  Let $M':=M\cap\overline{U_3}$
and let $\vartheta_t^\delta$ be the extensions of $\tilde\theta^\delta_t$
according to Lemma \ref{extension} which now can be extended to 
the identity near $K$.    We set $\Phi_t=\psi_t^\delta\circ\vartheta^\delta_t$
for small enough $\delta$.

$\hfill\square$

\section{A Carleman version of a result by Forstneri\v{c} and Rosay}

\subsection{The nice projection property}

Let $v\in\mathbb C^n$
be a nonzero vector and let $\epsilon>0$.  By $v_\epsilon$
we will mean an arbitrary vector satisfying $\|v_\epsilon-v\|\leq\epsilon$.
We let $\pi_{v_\epsilon}$ denote the orthogonal projection to the orthogonal complement of the vector $v_\epsilon$.     To simplify notation we always write $\mathbb C^{n-1}$ for 
these orthogonal complements, and by $R\mathbb B^{n-1}$ we 
mean $R\mathbb B^n$ intersected with the orthogonal complements.  

Let $M$ be a smooth submanifold of $\mathbb C^n$. We will assume that $M$ satisfies the 
following properties:

\begin{itemize}
\item[$A_1:$] The familly $\pi_{v_\epsilon}:M\rightarrow\mathbb C^{n-1}$ is uniformly proper, \emph{i.e.}, for any compact set $K\subset\mathbb C^{n-1}$, the set $\underset{\mathrm{v}_\epsilon}{\bigcup}\pi_{v_\epsilon}^{-1}(K)$ is compact,  
\item[$A_2:$] there exists a compact set $C\subset M$ such that $\pi_{v_\epsilon}:M\setminus C\rightarrow\mathbb C^{n-1}$ is an embedding onto a totally real manifold,
\item[$A_3:$] the familly $\pi_{v_\epsilon}(M)$ has uniformly bounded E-hulls in $\mathbb C^{n-1}$, \emph{i.e.}, for any compact subset $K\subset\mathbb C^{n-1}$ there exists an $R>0$ such that $h(K\cup\pi_{v_\epsilon}(M))\subset R\cdot\mathbb B^{n-1}$, and 
\item[$A_4:$] for any compact set $K\subset M$ we have that $x\mapsto\langle x,v_{\epsilon_0}\rangle$ is uniformly bounded 
away from zero on $K$ provided $\epsilon_0$ is small enough (depending on $K$).
\end{itemize}

\begin{remark}
 It follows from $A_3$ that $M$ has bounded E-hulls in $\mathbb C^n$.
\end{remark}

\begin{definition}
Let $M'\subset\mathbb C^n$ be a smooth manifold.  If there exists a holomorphic
automorphism $\alpha\in\mathrm{Aut_{hol}}\mathbb C^n$ and a pair $(v,\epsilon)$ such that 
the manifold $M=\alpha(M')$ satisfies $A_1-A_4$ we say that $M'$ has the \emph{nice projection property}.
\end{definition}

\begin{theorem}\label{FRCarleman}
Let $M\subset\mathbb C^n$ be a totally real manifold of class $\mathcal C^\infty$, fix a $\mathcal C^k$-norm on $M$, and assume that $M$
has the nice projection property.  Let $K\subset\mathbb C^n$ be compact, and assume that $K\cup M$ is polynomially convex. Let $\Omega$ be an open neighborhood of $K$.  The following hold:

Let $\phi:[0,1]\times (\Omega\cup M)\rightarrow\mathbb C^n$ be a $\mathcal C^\infty$-smooth map with the following properties:
\begin{itemize}
 \item [$(a)$] $\phi_0$ is the identity map,
\item [$(b)$] $\phi_t|$ is injective for all $t$, 
\item [$(c)$] $\phi_t|_\Omega$ is holomorphic for all $t$, 
\item [$(d)$] $\phi_t|_M$ is an embedding for all $t$, 
\item [$(e)$] there exists a compact set 
$S\subset M$ such that $\phi_t(z)=z$ for all $z\in M\setminus S$ for all $t$,
\item [$(f)$] $\phi_t(K\cup M)$ is polynomially convex for all $t$.
\end{itemize}
Then 
for any strictly positive continuous function $\delta\in\mathcal C(K\cup M)$ there exists a
map $\psi\in\mathrm{Aut_{hol}}\mathbb C^n$,
such that 
$$
|\psi-\phi_1|_{k,x}<\delta(x)
$$
for all $x\in K\cup M$.
\end{theorem}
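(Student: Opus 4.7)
The plan is to combine the Andersén--Lempert shear decomposition (Lemma \ref{andersen}) with Carleman approximation of the shear coefficients in the projected target $\mathbb C^{n-1}$, the latter being made available precisely by the nice projection property.

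First I would apply Theorem \ref{mainextension} to extend $\phi_t$ to a $\mathcal C^\infty$-smooth isotopy $\Phi_t$ of global maps on $\mathbb C^n$ that is holomorphic near $K$, of maximal rank along $M$, $\overline\partial$-flat to order $k-1$ on $M$, and equal to the identity outside a neighborhood of $S$. Then discretize $[0,1]$ into $0=t_0<\cdots<t_N=1$ so fine that each one-step map $\Phi_{t_{j+1}}\circ\Phi_{t_j}^{-1}$ is as close to the identity as one wishes on a neighborhood of the polynomially convex set $\Phi_{t_j}(K\cup M)$. Writing this one-step map as the time-one flow of a smooth vector field that is holomorphic near $\Phi_{t_j}(K)$ and $\overline\partial$-flat along $\Phi_{t_j}(M)$, approximate the vector field on the polynomially convex compactum by a polynomial vector field $X_j$, and apply Lemma \ref{andersen} to split $X_j$ into finitely many shear and overshear summands whose directions lie in an arbitrarily small cone around the distinguished vector $\mathrm v$ of the nice projection property.

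The crux is to replace each polynomial shear by a single holomorphic automorphism that approximates it in the Carleman sense on all of $K\cup M$. Such a shear has the form $z\mapsto z+f\bigl(\pi_{v_\varepsilon}(z)\bigr)v_\varepsilon$, with $f$ defined on $\pi_{v_\varepsilon}(\Phi_{t_j}(K\cup M))\subset\mathbb C^{n-1}$, and the goal is to approximate $f$ by an entire function on $\mathbb C^{n-1}$ in the Carleman sense, measured against a small positive continuous function. Here the axioms of the nice projection property feed in: $A_1$ makes $\pi_{v_\varepsilon}(K\cup M)$ closed in $\mathbb C^{n-1}$; $A_2$ makes it totally real off a compactum; $A_3$ says that its union with any compact set has bounded $E$-hull, which is exactly the Runge-exhaustion hypothesis required for Carleman approximation by entire functions in $\mathbb C^{n-1}$, in the spirit of Scheinberg--Hoischen and Gauthier--Zeron; and $A_4$ guarantees that an approximation of $f$ in $\mathbb C^{n-1}$ transfers to a $\mathcal C^k$-estimate for the shear on $M$, since $\langle z,v_\varepsilon\rangle$ is bounded away from zero on compacts of $M$.

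Finally, I would assemble the automorphism $\psi$ by an exhaustion argument. Choose a nested exhaustion $S\subset M_1\subset M_2\subset\cdots$ of $M$ by compacta, and inductively produce automorphisms $\Psi_r\in\mathrm{Aut_{hol}}\mathbb C^n$ so that $\psi_r=\Psi_r\circ\cdots\circ\Psi_1$ approximates $\phi_1$ in $\mathcal C^k$-norm on $K\cup M_r$ with error below a geometrically small fraction of $\min_{K\cup M_r}\delta$, while $\Psi_r$ itself is so close to the identity on $K\cup M_{r-1}$ that it does not spoil the estimates already secured there. The main obstacle is exactly this cumulative error control: each new shear built at step $r$ must be made nearly the identity on the previously processed pieces. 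This is handled by multiplying the approximating entire functions produced at step $r$ by additional Carleman-small entire factors vanishing to high order on $\pi_{v_\varepsilon}(\phi_{t_j}(K\cup M_{r-1}))$, which exist thanks again to $A_3$. The infinite composition then stabilizes on each compactum of $K\cup M$ and yields a holomorphic automorphism $\psi$ with $|\psi-\phi_1|_{k,x}<\delta(x)$ pointwise on $K\cup M$.
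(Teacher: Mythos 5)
Your opening steps are essentially the paper's: extend $\phi_t$ globally via Theorem~\ref{mainextension} so that it is $\overline\partial$-flat along $M$, holomorphic near $K$, and equal to the identity outside a compactum; discretize or treat $t$ as a parameter; approximate the generating vector field by polynomial fields; decompose these via Lemma~\ref{andersen} into shears and over-shears with directions close to the distinguished vector $\mathrm v$; and use $A_1$--$A_4$ to convert Carleman approximation of the shear coefficients into a Carleman estimate on $K\cup M$. All of that is faithful to the paper.

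The final assembly, however, is where your proposal diverges and where there is a genuine gap. The paper does \emph{not} use an infinite exhaustion $M_1\subset M_2\subset\cdots$ with an infinite composition $\Psi_r\circ\cdots\circ\Psi_1$. Because of hypothesis~$(e)$, the isotopy $\phi_t$ is the identity on $M\setminus S$ for a fixed compactum $S$; the paper exploits this by fixing an annulus $A=A(r_2,r_3)$ separating the ``active'' region from infinity, using Lemma~\ref{approxwithidentity} to arrange that \emph{all} the intermediate automorphisms from the Andersén--Lempert step are uniformly small on a full neighborhood of $A$, and then modifying the shear coefficients by a single smooth cutoff $\chi$ supported along $A$. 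The modified maps $\tilde\Theta_k$ are literally the identity outside $M_3$ and agree with $\Theta_k$ on $K\cup M_3\setminus A$; then property $A_2$ lets one rewrite the cut-off coefficients $\tilde g_k$ as functions $\tilde\tau_k\circ\pi_k$ on the projected manifold, and $A_3$ plus the Carleman result of \cite{MWO} gives entire replacements. The result is a \emph{finite} composition of holomorphic shears and over-shears that is Carleman-close to $\phi_1$ on all of $K\cup M$.

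Your proposed exhaustion, by contrast, leaves unresolved what $\Psi_r$ is supposed to do on $M_r\setminus M_{r-1}$, how the polynomial convexity and nice-projection hypotheses are to be preserved after composing with $\Psi_{r-1}\circ\cdots\circ\Psi_1$, and why the infinite composition converges given that the polynomial shears at earlier steps may move $M\setminus M_{r-1}$ arbitrarily far. Multiplying the coefficient functions at step $r$ by factors vanishing on $\pi_{v_\epsilon}(K\cup M_{r-1})$ keeps $\Psi_r$ close to the identity on the already-processed piece, but does not control what the step-$r$ shears do near infinity, which is exactly where the cutoff along $A$ is needed. In short, the idea that makes the whole thing close up --- cut off the shear coefficients along a fixed annular region and re-project --- is missing, and in its place you have inserted an iteration scheme with unaddressed convergence and stability issues. (An exhaustion of this type is used in the paper, but in the proof of Theorem~\ref{approximationaut}, where the map being approximated is genuinely non-compactly supported; it is not the right tool for Theorem~\ref{FRCarleman}, which by hypothesis $(e)$ reduces to a single compact Andersén--Lempert pass plus a cutoff.)
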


Preparing for the proof we start with a lemma.  

\begin{lemma}\label{approxwithidentity}
Let $M\subset\mathbb C^n$ be a compact totally real manifold (possibly with boundary) of class $\mathcal C^\infty$, and let $K\subset\mathbb C^n$ be a compact set such that $K\cup M$ is polynomially convex.
Let $A_1\subset A_2\subset\subset M\setminus K$ be closed subsets
with $A_1\subset int(A_2)$.   Let $\Omega$ be an open neighborhood of $K$, and let $\phi:\Omega\cup M\rightarrow\mathbb C^n$
satisfy (a)--(d) and (f) and also that $\phi_t|_{A_2}=id$ for each $t$.

Then there exist open neighborhoods $U'\subset U\subset\Omega$ of $A_1$, 
such that for any $\epsilon>0$, $\delta>0$ sufficiently small, and $k\in\mathbb N$,
there exists $\psi:[0,1]\times\mathbb C^n\rightarrow\mathbb C^n$ with $\psi(t,\cdot)$ holomorphic for each $t$ and real analytic in $t$, 
such that the following hold for all $t\in [0,1]$:
\begin{itemize}
\item [(i)] $\|\psi_t-\phi_t\|_{\overline {K(\delta)}}<\epsilon$, 
\item [(ii)] $|\psi_t-\phi_t|_{k,x}<\epsilon$ for all $x\in M$, 
\item [(iii)] $\|\psi_t-id\|_{\overline U}<\epsilon$, 
\item [(iv)] $\psi_t$ has rank $n$ on $M$, and
\item[(v)] $h(\psi_t(K\cup M\cup \overline U'))\subset U\cup \psi_t(K(\delta))$.
\end{itemize}
\end{lemma}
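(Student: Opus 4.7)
The plan is to first extend $\phi_t$ to a $\mathcal C^\infty$-smooth one-parameter family of maps of $\mathbb C^n$ whose failure to be holomorphic is concentrated away from $K$ and vanishes to order $k$ along $M$; then to approximate this family by an honestly holomorphic one depending real-analytically on $t$; and finally to verify the polynomial-convexity condition (v) by a perturbation argument. Items (i)--(iv) will fall out of the construction; condition (v) is the main obstacle.

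For the extension, apply Theorem \ref{mainextension} to produce $\Phi_t:[0,1]\times\mathbb C^n\to\mathbb C^n$ with $\Phi_t\in\mathcal H_k(\mathbb C^n,M)$, holomorphic on a fixed neighborhood $V_K$ of $K$ with $\overline{V_K}\subset\Omega$, of maximal rank along $M$, and uniformly close to $\phi_t$ in the relevant norms. Because $\phi_t\equiv\mathrm{id}$ on $A_2$, the bundle isomorphism supplied by Lemma \ref{bundelmap} can be arranged to be the identity on a full open neighborhood of $A_1$ in $\mathbb C^n$, and the resulting $\Phi_t$ produced by Lemma \ref{extension} then equals the identity on an open neighborhood $W$ of $A_1$. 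Fix open neighborhoods $A_1\subset U'\Subset U\Subset W$ once and for all.

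For the holomorphic approximation with parameter, thicken $[0,1]$ to an open disc $D\subset\mathbb C$ and extend $\Phi$ smoothly to $D\times\mathbb C^n$, preserving holomorphy in $z$ on $V_K$, $\overline\partial$-flatness of order $k-1$ on $M$, and the identity on $W$. Since $K\cup M$ is polynomially convex in $\mathbb C^n$ and $[0,1]$ is polynomially convex in $\mathbb C$, the product $[0,1]\times(K\cup M)$ is polynomially convex in $\mathbb C^{n+1}$; with $U$ chosen small enough the same is true of $[0,1]\times(K\cup M\cup\overline U)$. A parametric $\mathcal C^k$ Mergelyan theorem for polynomially convex sets that are the union of a holomorphically convex piece and a totally real manifold (in the spirit of the Forstneri\v c--L\o w--\O vrelid result cited in the introduction), applied componentwise to $\Phi$, followed by Taylor truncation to pass from locally holomorphic to entire, produces $\psi(t,z)$ holomorphic in $(t,z)$ on $\mathbb C\times\mathbb C^n$ with $|\psi-\Phi|_{k,(t,x)}<\eta$ on $[0,1]\times(K\cup M\cup\overline U)$. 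Setting $\psi_t:=\psi(t,\cdot)$ and taking $\eta$ sufficiently small (and $\delta$ with $\overline{K(\delta)}\subset V_K$), estimates (i) and (ii) follow directly, (iii) follows from $\Phi_t\equiv\mathrm{id}$ on $\overline U$, and (iv) follows from the openness of the full-rank condition.

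The remaining obstacle is (v). By hypothesis (f) the set $\phi_t(K\cup M)$ is polynomially convex, and by construction $\Phi_t(\overline{U'})=\overline{U'}$, while $\Phi_t$ agrees with $\phi_t$ on $M$ and approximates it uniformly near $K$. The plan is to apply Corollary \ref{hullnotinsmallnbh} with the compact set there taken to be $\phi_t(K\cup M)$ and the totally real manifold taken to be a small totally real collar of $M$ near $A_1$ sitting inside $W$: the fattening $\overline{U'}$ is contained in a $\beta$-neighborhood of this collar, so the corollary traps $h(\phi_t(K\cup M)\cup\overline{U'})$ inside $\phi_t(K)\cup U$. The stability clause of that corollary lets us pass the same conclusion to the $\mathcal C^1$-perturbed image $\psi_t(K\cup M\cup\overline{U'})$ provided $\eta$ is small relative to the constants $\beta,\epsilon$ appearing there; choosing $\delta$ small enough to absorb the motion of $\psi_t(K)$ into $\psi_t(K(\delta))$ then yields the claimed inclusion in (v).
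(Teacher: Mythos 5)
Your architecture mirrors the paper's: extend $\phi_t$ to an ambient family via Theorem~\ref{mainextension} (with the identity near $A_1$, made possible by $\phi_t\equiv\mathrm{id}$ on $A_2$ and the bundle lemmas), then approximate the parametric family on $[0,1]\times(K\cup M\cup\overline U)\subset\mathbb C^{n+1}$, and finally deduce (v) from a perturbation argument using hull control near $K\cup M$. Items (i)--(iv) are handled the same way in both arguments.

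The genuine gap is in the parametric approximation step. You say "thicken $[0,1]$ to an open disc $D\subset\mathbb C$ and extend $\Phi$ smoothly to $D\times\mathbb C^n$, preserving holomorphy in $z$ on $V_K$, $\overline\partial$-flatness of order $k-1$ on $M$, and the identity on $W$," then invoke a Mergelyan-type theorem on $\tilde K\cup\tilde M\subset\mathbb C^{n+1}$. But to apply such a result on the pair $(\tilde K,\tilde M)$ you need $\overline\partial$-flatness of the extended $\Phi(w,z)$ \emph{jointly in $(w,z)$} along $\tilde M=[0,1]\times M$ and holomorphy in $(w,z)$ near $\tilde K$, not just the $z$-flatness and $z$-holomorphy inherited from each $\Phi_t$. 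A bare smooth extension in $w$ (e.g.\ $\Phi(w,z):=\Phi_{\mathrm{Re}\,w}(z)$) yields $\partial_{\bar w}\Phi=\tfrac12\partial_t\Phi\ne 0$ along $[0,1]$, so the hypotheses of the Mergelyan theorem fail. This is exactly the point the paper's construction is designed to sidestep: it builds $\widehat\phi_w(z)=\sum_j\tilde\alpha_j(w)\,\phi_{j/N}(z)$ from a partition of unity on $[0,1]$ with each $\alpha_j$ replaced by an \emph{entire} $\tilde\alpha_j$, so that the tensor structure automatically gives membership in $\mathcal H_k(\mathbb C^{n+1},\tilde M)\cap\mathcal O(\tilde K)$ and, after applying \cite{MWO}, real-analytic dependence on $t$. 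Your "Taylor truncation to pass from locally holomorphic to entire" is also not needed once $\tilde K\cup\tilde M$ is polynomially convex -- that is Oka--Weil -- but more importantly it does not repair the missing $w$-direction flatness.

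A secondary issue: in your argument for (v) you apply Corollary~\ref{hullnotinsmallnbh} with the totally real manifold taken to be a "small totally real collar" and $\overline{U'}$ absorbed as a $\beta$-neighborhood. That corollary only controls hulls of sets of the form $K\cup S$ with $S$ a \emph{closed subset of a totally real manifold}; $\overline{U'}$ is a full-dimensional compact, not such an $S$. The paper instead includes $\overline{U}$ (and $\overline{K(\delta)}$) into the \emph{compact} slot $K$ of the corollary, keeping $M$ as the totally real part; that is the application that makes sense here, and your version needs to be rephrased accordingly.
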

\begin{proof}
By Corollary \ref{hullnotinsmallnbh} and the assumption (f) there exists a 
$\delta$ small enough such that $h(\overline{K(\delta)}\cup M)\subset\Omega$, so for the purpose of 
approximating $\phi$ on $\overline{K(\delta)}\cup M$ we may assume that $\overline{K(\delta)}\cup M$
is polynomially convex.  If $U'\subset\subset U\subset\subset U''$ are small enough neighborhoods 
of $A_1$, we may extend $\phi$ to be the identity on $U''$, and by the same 
corollary get that 
\begin{itemize}
\item [(iv)] $h(\overline{K(\delta)}\cup M\cup\overline U)\subset U''$, 
\item[(v)] $h(\psi_t(K\cup M\cup \overline U'))\subset U\cup \psi_t(K(\delta))$,
for any $\psi$ which is a sufficiently small perturbation of $\phi$ on $\overline{K(\delta)}\cup M\cup\overline U$.
\end{itemize}

By Theorem \ref{mainextension} we may also assume that $\phi$ is 
$\overline\partial$-flat to order $k$ along $M$, and has rank $n$ along $M$.

It remains to show 
(i)--(iii) and for this we will transform this into an approximation problem without a parameter $t$.  
Set 
$$
K':=\mbox{closure}( \widehat{ [\overline{K(\delta)}\cup M\cup\overline U]} \setminus M), 
$$
define $\tilde M=M\times [0,1]\subset\mathbb C^n\times\mathbb C$ and $\tilde K=K'\times [0,1]\subset\mathbb C^n\times\mathbb C$.  Note that 
$\tilde M$ is a totally real manifold and that $\tilde K\cup\tilde M$ is 
a polynomially convex.
For $N\in\mathbb N$ we define a covering of the interval $I=[0,1]$ as follows: let $I_0=[0,\frac{1}{N})$, let $I_N=(1-\frac{1}{N},1]$, and let $I_j=(\frac{2j-1}{2N},\frac{2j+1}{2N})$ for $j=1,2,...,N-1$.  Let $\{\alpha_j\}$ be a partition of unity with respect 
to the cover $\{I_j\}$.  Define 
$$
\tilde \phi_w(z):=\underset{0\leq j\leq N}{\sum}\alpha_j(w)\cdot \phi_{\frac{j}{N}}(z),
$$
on $\tilde K\cup\tilde M$, with coordinates $(z,w)$ on $\mathbb C^n\times\mathbb C$.
Each of the functions $\alpha_j$ may be approximated arbitrarily well on $I$ by entire functions $\tilde\alpha_j$ on $\mathbb C$.
So the mapping 
$$
\widehat \phi_w(z):=\underset{0\leq j\leq N}{\sum}\tilde\alpha_j(w)\cdot \phi_{\frac{j}{N}}(z),
$$
is in $\mathcal H_k(\mathbb C^{n+1},\tilde M)\cap\mathcal O(\tilde K)$. 
If $N$ was chosen big enough, and if the approximation of the partition of unity was good enough, then
\begin{itemize}
\item [(a)] $\|\widehat \phi_w-\phi_w\|_{K_w}<\frac{\epsilon}{2}$, and
\item [(b)] $|\widehat \phi_w-\phi_w|_{k,x}<\frac{\epsilon}{2}$ for all $x\in M_w$. 
\end{itemize}

\medskip

That we can approximate $\widehat\phi$ on $\tilde K\cup\tilde M$ follows directly from \cite{MWO}.

\end{proof}

\subsection{Proof of Theorem \ref{FRCarleman}}

Choose $r_1>0$ such that $S\subset r_1\cdot\mathbb B^n$.   By possibly having 
to increase $r_1$ we may assume that $\phi_t((M\cap r_1\mathbb B^n)\cup K)\subset r_1\mathbb B^n$
for all $t$.
It follows from $A_3$ above that
there exists an $R>0$ such that 
$$
\overline{h(\pi_{v_{\epsilon}}(\overline{r_1\cdot\mathbb B^{n}}\cup M))}\subset R\cdot\mathbb B^{n-1}
$$
for all $\|v_{\epsilon}-v\|\leq\epsilon$.   
By possibly having to increase $R$ we may assume that  
$\pi_{v_\epsilon}(C)\subset R\cdot\mathbb B^{n-1}$ for all $\|v_{\epsilon}-v\|\leq\epsilon$.
Given $r_2<r_3$ we let $A$ denote 
the annular set 
$$
A=A(r_2,r_3):=\{z\in\mathbb C^n; r_2\leq\|z\|\leq r_3\}.
$$
Fix $r_2<r_3$ such that $\pi_{v_{\epsilon}}(A\cap M)\subset\mathbb C^{n-1}\setminus R\cdot\overline{\mathbb B^{n-1}}$ for all $\|v_{\epsilon}-v\|\leq\epsilon$.
Let $M_j$ denote $M_j:=M\cap\overline{r_j\mathbb B^{n}}$ for $j=2,3$.  Let $\chi\in\mathcal H_k(\mathbb C^n,M)$
such that $0\leq\chi\leq 1$, $\chi|_{r_1\cdot\overline{\mathbb B^n}\cup M_2}\equiv 1$, and $\chi|_{M\setminus M_3}\equiv 0$. 
Let $\tilde A:=M\cap A$. 
Set 
\begin{equation}\label{normchi}
T:=\underset{x\in A}{\sup}\{|\chi|_{k,x}\}.
\end{equation}
By \cite{LW} we have that if $M'$ is a sufficiently small $\mathcal C^1$-perturbation of $\pi_v(\overline{M\setminus C})$
which is equal to $\pi_v(M)$ on $\pi_v(M\setminus M_3)$, then 
$$
h(\pi_v(r_1\cdot\overline{\mathbb B^n})\cup M')\subset R\cdot\mathbb B^{n-1}.
$$
It follows that there exists a constant $\epsilon_1>0$ such that, by possibly having to decrease $\epsilon$, we have that
\begin{itemize}
\item[(1)]\label{pert}
if  $M'$ is a $\mathcal C^1$-$\epsilon_1$-perturbation of $M\setminus C$ which is equal to M outside $M_3$, then 
$\pi_{v_{\epsilon}}(M')$ is a totally real manifold with $h(r_1\cdot\overline{\mathbb B^n}\cup M')\subset R\cdot\mathbb B^{n-1}$.  
\end{itemize}

\subsubsection{Plan of proof}
The theorem will be proved in several steps; the first steps are the same as in the usual A-L procedure.  
\begin{itemize} 
\item[i)] First we will approximate the whole 
isotopy $\phi_t(z)$ on $K\cup M_3$ by an isotopy $g_t(z)$ which is holomorphic on $\mathbb C^n\times [0,1]$.
In addition to being a good approximation on $K\cup M_3$ we need $g_t(z)$ to be uniformly small on a full open neighborhood $U$
of $A$.
\item[ii)] We interpret $g_t(z)$ as the flow of a time dependent vector field $X_t(z)$, which we, by approximation, will assume is polynomial, and then approximate 
$g_1(z)$  by a composition of flows $h^j_{t}(z)$ of time independent polynomial vector fields $X^j(z), j=1,...,m$, all of then being uniformly small 
on $U$.   
\item[iii)] Each $X^j$ may we written as a sum of shear- and over-shear fields.  We will then approximate 
each flow $h^j_{t}$ by a composition of shear- and over-shear flows.  
\item[iv)] Each shear- and over-shear flow from step iii) will be modified on ${r_1\cdot\overline{\mathbb B^n}}\cup M_3$
by multiplying with smooth cutoff functions along (images of) $A$, thereby obtaining good (smooth) shear- and over-shear 
like maps defined on (images of) ${r_1\cdot\overline{\mathbb B^n}}\cup M$, being the identity outside $M_3$.
\item[v)] Finally, the modified maps will be interpreted as shears- and over-shears defined by 
using the projections of ${r_1\cdot\overline{\mathbb B^n}}\cup M$ along the $v_\epsilon$-s (using the good projection property), and approximated by holomorphic shears and over-shears using Carleman appoximation by entire functions.  
\end{itemize}

\subsubsection{Approximation by an isotopy of holomorphic injections}

Let $U'\subset U$ be neighborhoods of $A$ as in Lemma \ref{approxwithidentity}, 
and let $\delta$ be as in the same lemma. 
Let $0<\epsilon_2<\epsilon_1$ be a small constant to be determined later.    
According to Lemma \ref{approxwithidentity} there exists an isotopy $\psi_t$
of entire maps such that 

\begin{itemize}
 \item [(i)] $\|\psi_t-\phi_t\|_{\overline{K(\delta)}}<\epsilon_2$, 
\item [(ii)] $|\psi_t-\phi_t|_{k,x}<\epsilon_2$ for all $x\in M_3$, and 
\item [(iii)] $\|\psi_t-id\|_{\overline U}<\epsilon_2$ for all $t$.
\end{itemize}

Note that 
$$
h(\psi_t(K\cup M_3\cup\overline{U'}))\subset\psi_t(K(\delta)\cup U).
$$

We now proceed to approximate the map $\psi_1$ on 
$K\cup M_3$ and the identity on $M\setminus M_3$.  
We may assume that $\psi_0=id$.  

\subsubsection{The reduction to flows of shear- and over shear fields.}

Let $W$ be a neighborhood of $\overline{K(\delta)}\cup M_3\cup\overline U$ such that 
$\psi_t:\overline W\rightarrow\mathbb C^n$
is a family of injections.   
We define a time dependent vector field $X(t,z)$ by
$$
X(t_0,z_0):=\frac{d}{dt}|_{t=t_0}\psi_t(\psi_{t_0}^{-1}(z_0))
$$ 
for all $z_0\in\psi_{t_0}(W)$.   Then $\psi$ is the flow of $X$.  We let $X_t$ denote the autonomous 
vector field we get by fixing $t$.    
Note that
$$
h(\psi_t(K\cup M_3\cup\overline{U'}))\subset\psi_t(W) 
$$
and so each $X_t$ is the uniform limit of polynomial fields near $\psi_t(K\cup M_3\cup\overline{U'})$. \

Thus Lemma \ref{andersen} and the usual Anders\'{e}n-Lempert construction allows us to find 
automorphisms $\Theta_k(z)$ of $\mathbb C^n, k=1,...,N$, 
such that $\Theta(N)=\Theta_N\circ\Theta_{N-1}\circ\ldots \circ\Theta_1$
approximates $\psi_1$ as well as we want near $K\cup M_3\cup\overline{U'}$ 
Moreover, each $\Theta_k$ is of one of the two forms
\begin{equation}
\Theta_{k}(z)= z + \tau_k(\pi_k(z))\cdot v_k, 
\end{equation}
or 
\begin{equation}
\Theta_{k}(z)= z + (e^{\tau_k(\pi_k(z))}-1)\langle z,v_k\rangle\cdot v_k, 
\end{equation}
where the quantities $\langle z,v_k\rangle$ are bounded away from zero, 
and the functions $\tau_k(\pi_k(z))$ are as small as we like.   Also 
by (iii) above we may assume that each map $\Theta_k$ is as small 
as we like on $\overline{U'}$.

\subsubsection{Approximation  by smooth maps.}\label{smoothapproximation}

We will now describe an inductive procedure how to modify the maps 
$\Theta_{k}$.   We have that  
\begin{equation}
\Theta_{k}(z)= z + \tau_k(\pi_k(z))\cdot v_k, 
\end{equation}
or 
\begin{equation}
\Theta_{k}(z)= z + (e^{\tau_k(\pi_k(z))}-1)\langle z,v_k\rangle\cdot v_k, 
\end{equation}
for entire functions $\tau_k$, depending on wether $\Theta_{k}$ is a shear or an over-shear. 
At any rate, we may write 
\begin{equation}
\Theta_{k}(x)= x + g_k(x)\cdot v_k, 
\end{equation}
defined 
for $x\in\Theta(k-1)(A)$.

Let $\Theta_{0}:=id$, and define inductively 
\begin{align*}
\tilde\Theta_{k}(x) & := x + \chi(\tilde\Theta(k-1)^{-1}(x))\cdot g_k (\Theta(k-1)\circ\tilde\Theta(k-1)^{-1}(x))\cdot v_k\\
& := x + \tilde g_k(x)\cdot v_k, 
\end{align*}
for all $x\in\tilde\Theta(k-1)(A)$, $\tilde\Theta_{k}(x):=\Theta_{k}(x)$ for 
all $x\in K\cup M_3\setminus A$, and $\tilde\Theta_{k}(x):=id$ for 
all $x\in M\setminus M_3$.
Writing 
$\Theta(k)(x) = x + h_k(x)\cdot w_k$
on $A$ it is clear that $\tilde\Theta(k)(x) = x + \chi(x)\cdot h_k(x)\cdot w_k$, and 
so this is well defined if each composition $\Theta(k)$ is small enough on $\overline{U'}$. 
Note that $\Theta(N)=x+h_N(x)\cdot w_N$, and that $|h_N\cdot w_N|$ is as small as we 
like depending on the choice of $\epsilon_2$ above.  The choice of $\epsilon_2$
is made after fixing $\chi$, so we choose it depending on the constant $T$ above, and so we
may assume that $\tilde\Theta(N)$ is 
as close to the identity as we like on $A$.  \

\

Finally we want to rewrite the $\tilde\Theta_{k}$-s as shears and over-shears, \emph{i.e.}, defined 
via the projections $\pi_{k}$.  First assume that the original $\Theta_{k}$ was a shear map: 
if $\epsilon_2$ was chosen small enough we have that 
$\pi_k(\tilde\Theta(k-1)(A))$ is a totally real manifold contained in $\mathbb C^{n-1}\setminus R\cdot{\mathbb B^{n-1}}$, so we may write $\tilde g_k(x)=\tilde\tau_k(\pi_k(x))$ on $\Theta(k-1)(A)$.
Since $\tilde\tau_k$ will agree with $\tau_k$ near $\pi_k(\tilde\Theta(k-1)(A\cap\overline{(M_3\setminus A)}))$, 
we may extend $\tilde\tau_k$ to be equal to the original $\tau_k$ on $R\cdot\overline{\mathbb B^{n-1}}\cup
\pi_k(\Theta(k-1)(M_3\setminus A))$.  We may aslo extend $\tilde\tau_k$ to be zero 
on $\pi_k(\Theta(k-1)(M\setminus M_3))$.  

If $\Theta_{k}$ was an over-shear we write first $\tilde g_k(x)=\psi_k(\pi_k(x))$, we extend $g_k$
as we just did with $\tilde\tau_k$, but now we want to solve 
\begin{equation}
(e^{\tilde\tau_k(\pi_k(x))}-1)\langle x,v_k\rangle = \psi_k(\pi_k(x))
\end{equation}
on $\pi_k(\Theta(k-1)( A))$.  This is doable since $\langle x,v_k\rangle$ is uniformly bounded away from zero (independent of $k$)
by $A_4$ in the definition of the nice projection property, and we may assume that $ \psi_k(\pi_k(x))$ is arbitrarily small compared to this. 

\subsubsection{Approximation by holomorphic automorphisms}

We finally show by induction on $k$ that the compositions $\tilde\Theta(k)$ be 
be approximated in the sense of Carleman on $K\cup M$.  \

More generally than showing this first for $k=1$ we show first that 
for any $k$ we have that $\tilde\Theta_k$ may be approximated 
in the sense of Carleman on $\tilde\Theta(k-1)(K\cup M)$.
Note that $h(\pi_1(\tilde\Theta(k-1)(K\cup M)))\subset R\cdot\mathbb B^{n-1}$ and so 
it follows by \cite{MWO} that the function $\tilde\tau_k$ may be 
approximated in the sense of Carleman on $\pi_k(\tilde\Theta(k-1)(K\cup M))$ by entire functions.  

Now the induction step is clear: since $\tilde\Theta_{k+1}$ may be approximated on $\tilde\Theta(k)(K\cup M)$
and since $\tilde\Theta(k)$ may be approximated on $K\cup M$ we get that $\tilde\Theta(k+1)$ may 
be approximated on $K\cup M$.

\section{Approximation of smooth automorphisms of $\mathbb{R}^k\subset\mathbb C^n$}

\begin{theorem}\label{approximationaut}
 Let $\phi:\mathbb R^s\rightarrow \mathbb R^s$ be a $\mathcal C^\infty$-smooth 
automorphism, and assume that $s<n$.
Then $\phi$ can be approximated in the sense of Carleman by holomorphic automorphisms of $\mathbb C^n$, \emph{i.e.}, 
given $\epsilon\in\mathcal C(\mathbb R^s)$ and $k\in\mathbb N$, there exists  $\Psi\in Aut_{hol}\mathbb C^n$
such that $|\Psi-\phi|_{k,x}<\epsilon(x)$ for all $x\in\mathbb R^s$.  
\end{theorem}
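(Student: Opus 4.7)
The plan is to realize $\Psi$ as the limit of an Anders\'en-Lempert-type push-out construction, iterating Theorem \ref{main} to approximate a sequence of compactly supported smooth diffeomorphisms of $\mathbb R^s$ that exhaust $\phi$.

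First I would fix exhaustions $B_1\subset B_2\subset\cdots$ of $\mathbb R^s$ by closed balls and $L_1\subset L_2\subset\cdots$ of $\mathbb C^n$ by polynomially convex compact sets, arranged so that $L_N\cap\mathbb R^s\subset\mathrm{int}(B_N)$ and $L_N\cup\mathbb R^s$ is polynomially convex (e.g.\ $L_N$ a closed ball). I would then construct compactly supported smooth diffeomorphisms $\Phi_N:\mathbb R^s\to\mathbb R^s$ with $\Phi_N=\phi$ on $B_N$ by cutting off, near the compact tube $\bigcup_t\phi^t(B_N)$, the generating time-dependent vector field of a smooth isotopy $\phi^t$ from $\mathrm{id}$ to $\phi$; the orientation-reversing case reduces to the orientation-preserving case after composing with a linear complex reflection, and in the case $s=4$, where smooth connectedness of $\mathrm{Diff}(\mathbb R^4)$ is unsettled, one exploits the extra dimensions of $\mathbb C^n$ and Haefliger-type general position to get a compactly supported smooth isotopy of embeddings $\mathbb R^s\to\mathbb C^n$. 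By induction I would produce $\Psi_N\in\mathrm{Aut}_{\mathrm{hol}}(\mathbb C^n)$ satisfying $|\Psi_N-\phi|_{k,x}<(1-2^{-N})\epsilon(x)$ for $x\in B_N\cap\mathbb R^s$ and $\|\Psi_N-\Psi_{N-1}\|_{\mathcal C^k(L_{N-1})}<2^{-N}$.

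At the inductive step I would build a smooth target $\phi_1^{(N)}$ on $\Omega\cup\mathbb R^s$, where $\Omega$ is a small open neighborhood of $L_{N-1}$, by setting $\phi_1^{(N)}|_\Omega:=\Psi_{N-1}$ (which is holomorphic) and taking $\phi_1^{(N)}|_{\mathbb R^s}$ to be a small cutoff-modification of $\Phi_N$ that agrees with $\Psi_{N-1}$ on $\Omega\cap\mathbb R^s$; this gluing is possible because $\Psi_{N-1}$ is already $\epsilon$-close to $\Phi_N$ on $B_{N-1}\supset L_{N-1}\cap\mathbb R^s$ by the inductive hypothesis. Connect $\phi_1^{(N)}$ to the identity by an isotopy $\phi_t^{(N)}$ whose restriction to $\Omega$ is a path in $\mathrm{Aut}_{\mathrm{hol}}(\mathbb C^n)$ from $\mathrm{id}$ to $\Psi_{N-1}$ (which exists since $\mathrm{Aut}_{\mathrm{hol}}(\mathbb C^n)$ is path-connected for $n\geq 2$) and whose restriction to $\mathbb R^s$ is a compactly supported smooth isotopy matching this on $\Omega\cap\mathbb R^s$. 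After arranging polynomial convexity of the intermediate images $\phi_t^{(N)}(L_{N-1}\cup\mathbb R^s)$ via Proposition \ref{fixingisotopy}, an application of Theorem \ref{main} with $K:=L_{N-1}$ yields a holomorphic automorphism approximating $\phi_1^{(N)}$ in the Carleman sense on $K\cup\mathbb R^s$ with any prescribed tolerance, which I take as $\Psi_N$; choosing the tolerance small enough ensures the inductive estimates.

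Uniform closeness of $\Psi_N$ to $\Psi_{N-1}$ on $L_{N-1}$ forces $\Psi_N$ to converge on compact subsets of $\mathbb C^n$, and the standard Anders\'en-Lempert push-out argument upgrades the limit $\Psi$ to an element of $\mathrm{Aut}_{\mathrm{hol}}(\mathbb C^n)$; the Carleman estimates on the $B_N$ then pass to the limit to give $|\Psi-\phi|_{k,x}<\epsilon(x)$ on all of $\mathbb R^s$. The principal technical obstacle is the simultaneous maintenance of polynomial convexity of $K\cup\mathbb R^s$ and of its isotoped images at every stage --- addressed by careful choice of the $L_N$ together with Proposition \ref{fixingisotopy} --- alongside the delicate gluing of a nontrivial $\mathrm{Aut}_{\mathrm{hol}}(\mathbb C^n)$-path on $\Omega$ with the compactly supported $\mathbb R^s$-isotopy, subject to the identity-outside-compact condition (3) of Theorem \ref{main}.
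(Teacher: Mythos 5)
Your overall strategy --- an inductive ``push-out'' in which each step applies a Carleman version of the Anders\'en--Lempert theorem to a cut-off correction, glues against the previous approximant, and then passes to a limiting automorphism --- is the same as the paper's. The convergence and push-out bookkeeping you sketch (closeness of $\Psi_N$ to $\Psi_{N-1}$ on an exhaustion, images of balls swallowing ever-larger balls) matches conditions $(3_i)$ and $(4_i)$ in the paper's induction, and your gluing step plays the role of Lemma~\ref{glue}.

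The real difference, and the place where your version has a gap, is precisely the ``principal technical obstacle'' you name at the end and leave unresolved. In your scheme the target $\phi^{(N)}_1$ equals the already-constructed automorphism $\Psi_{N-1}$ on $\Omega\supset L_{N-1}$, so to invoke Theorem~\ref{main} you must produce an isotopy that, on $\Omega$, runs through $\mathrm{Aut}_{\mathrm{hol}}(\mathbb C^n)$ from $\mathrm{id}$ all the way to $\Psi_{N-1}$, while on $\mathbb R^s$ it is a \emph{compactly supported} isotopy of embeddings compatible with this path on $\Omega\cap\mathbb R^s$, with $\phi_t(K\cup\mathbb R^s)$ polynomially convex throughout. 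An arbitrary path from $\mathrm{id}$ to $\Psi_{N-1}$ can move $\Omega\cap\mathbb R^s$ far off $\mathbb R^s$; interpolating that motion back to the identity outside a fixed compact set of $\mathbb R^s$, while keeping an embedding and controlling hulls at every $t$, is exactly what is not clear, and neither ``Haefliger general position'' nor Proposition~\ref{fixingisotopy} on its own supplies it, since the latter needs the input isotopy to already be one of embeddings with controlled hulls. The paper avoids this entirely by a \emph{relative} formulation: Lemma~\ref{mainstepaut} approximates, at each step, the \emph{remainder} diffeomorphism $\phi_i$ of the already-perturbed manifold $M_i=\psi_i(\mathbb R^s)$, which is the identity near $K$. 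Cutting off the generating vector field of an isotopy of $\phi_i$ produces a compactly supported isotopy of $M_i$ that is the identity near $K$ for all $t$, so the isotopy required by Theorem~\ref{FRCarleman} is small and tame, and polynomial convexity of $K\cup M_{i,t}$ is easy to arrange; the automorphism obtained is then composed on the left with $\psi_i$. Passing to the perturbed manifold $M_i$ forces the use of the more general Theorem~\ref{FRCarleman} (for manifolds with the nice projection property) rather than Theorem~\ref{main}, but this is what makes the isotopy construction unproblematic. To repair your write-up you should replace the ``absolute'' target $\phi^{(N)}_1$ and the long $\mathrm{Aut}_{\mathrm{hol}}$-path by this relative correction step, i.e.\ essentially reintroduce Lemma~\ref{mainstepaut}.

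Two smaller remarks. Your worry about $\mathrm{Diff}(\mathbb R^4)$ is not needed: both you and the paper only require a (not necessarily compactly supported) isotopy from $\mathrm{id}$ to an orientation-preserving diffeomorphism of $\mathbb R^s$, which exists in every dimension by the Alexander trick; the compact support is manufactured afterwards by cutting off the time-dependent vector field on the orbit of a compact set. And for the limit $\Psi$ to be an automorphism rather than a Fatou--Bieberbach map you must actually build in a push-out condition of the form $R_i\mathbb B^n\subset\psi_i(r_i\mathbb B^n)$ with $R_i\to\infty$; your phrase ``standard Anders\'en--Lempert push-out argument'' gestures at this, but the condition has to be arranged in the induction (it is item $3)$ in Lemma~\ref{mainstepaut} and $(4_i)$ in the paper), not merely invoked at the end.
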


We start by describing a gluing procedure that will be used 
in an induction argument to prove Theorem \ref{approximationaut}.
Let $M\hookrightarrow\mathbb C^n$ be a smooth embedded submanifold, and 
let $\pi:N\rightarrow M$ be an embedded neighborhood of the zero section 
of the normal bundle.  Then any sufficiently small $\mathcal C^k$-perturbation $M'$
of $M$ can be thought of as $\mathcal C^k$-small section $s\in\Gamma (M,N)$.   
Fix a normal exhaustion $K_j\subset K_{j+1}^\circ$ of $M$, and 
fix functions $\chi_j\in\mathcal C^\infty_0(K_{j+1}^\circ)$
with $\chi_j\equiv 1$ near $K_j$.  
\begin{lemma}\label{glue}
Let $\psi$ be a smooth diffeomorphism of $M$, and let $\epsilon\in\mathcal C(M)$
be a strictly positive function.  Then there exists a strictly positive $\delta\in\mathcal C(M)$   
such that the following hold.   For any $m\in\mathbb N$ and any smooth embedding $\phi:M\rightarrow\mathbb C^n$ such that $|\phi-\psi|_{k,x}<\delta(x)$ for all $x\in K_{m+1}$, and such that $\phi(M)$ is a $\delta$-perturbation
of $M$ in the sense that $\phi(M)$ can be written as a section $s\in\Gamma(M,N)$
where $|s-\mathrm{id}|_{k,x}<\delta(x)$ for all $x\in M$, the map 
$$
\tilde\phi:= \phi(x) + (1-\chi_m(x))(s(\psi(x))-\phi(x))
$$
satisfies $|\tilde\phi-\psi|_{k,x}<\epsilon(x)$ for all $x\in M$.
\end{lemma}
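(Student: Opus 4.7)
The plan is to rewrite $\tilde\phi - \psi$ algebraically and reduce to two standard Leibniz/chain-rule estimates. A direct rearrangement of the defining formula gives the convex-combination identity
\begin{equation*}
\tilde\phi(x) - \psi(x) = \chi_m(x)\bigl(\phi(x) - \psi(x)\bigr) + \bigl(1 - \chi_m(x)\bigr)(s - \mathrm{id})(\psi(x)),
\end{equation*}
where I have used that $s(\psi(x)) - \psi(x) = (s - \mathrm{id})(\psi(x))$ because $\psi$ maps $M$ into itself. I will estimate the two summands separately and then choose $\delta$ to absorb the resulting constants.

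For the first summand, Leibniz's rule yields
\begin{equation*}
|\chi_m(\phi - \psi)|_{k,x} \le C_k\, |\chi_m|_{k,x}\, |\phi - \psi|_{k,x}.
\end{equation*}
The key observation is that at each fixed $x \in M$ only a single ``transition index'' $j = j(x)$ contributes: for $m$ large enough that $x$ lies in $K_m$, the cutoff $\chi_m$ equals $1$ in a neighbourhood of $x$ and so $|\chi_m|_{k,x} = 1$; for $m$ small enough that $x$ lies outside $K_{m+1}^{\circ}$, $\chi_m$ vanishes identically near $x$ and so $|\chi_m|_{k,x} = 0$. Hence $\sup_{m}|\chi_m|_{k,x}$ is finite at every $x$, and by a standard partition-of-unity construction adapted to the exhaustion $\{K_j\}$ it is bounded above by a continuous strictly positive function $Q(x)$ on $M$. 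Whenever $\chi_m$ is nonzero near $x$ one has $x \in K_{m+1}$, so the hypothesis $|\phi - \psi|_{k,x} < \delta(x)$ applies.

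For the second summand, the chain rule gives
\begin{equation*}
|(s - \mathrm{id}) \circ \psi|_{k,x} \le F_k(|\psi|_{k,x})\, |s - \mathrm{id}|_{k,\psi(x)},
\end{equation*}
where $F_k$ is a polynomial expression in the $\mathcal{C}^k$-norm of $\psi$ (continuous in $x$ since $\psi$ is smooth); combined with $|1-\chi_m|_{k,x} \le 1 + Q(x)$ and the hypothesis $|s - \mathrm{id}|_{k,\psi(x)} < \delta(\psi(x))$, this controls the second summand in terms of the pullback $\delta \circ \psi$. I then choose $\delta$ to be a continuous strictly positive function on $M$ satisfying
\begin{equation*}
2C_k\, Q(x)\, \delta(x) \le \epsilon(x), \qquad 2C_k\,(1 + Q(x))\, F_k(|\psi|_{k,x})\, \delta(\psi(x)) \le \epsilon(x).
\end{equation*}
Setting $y = \psi(x)$, the second constraint is a continuous strictly positive upper bound for $\delta(y)$ since $\psi$ is a diffeomorphism; taking a continuous strictly positive minorant of the minimum of the two bounds produces the required $\delta$. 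Putting the two summand estimates together yields $|\tilde\phi - \psi|_{k,x} < \epsilon(x)$ on all of $M$.

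The main obstacle is that a single $\delta$ must work uniformly in $m \in \mathbb{N}$, whereas a priori the cutoffs $\chi_m$ and their derivatives vary wildly with $m$. The resolution is precisely the transition-index observation above: at each point $x$ the quantity $|\chi_m|_{k,x}$ depends effectively on only one value of $m$, which collapses the supremum in $m$ to the pointwise finite function $Q(x)$ and thereby makes the constraint on $\delta$ intrinsic to $(M,\psi,\epsilon)$ alone.
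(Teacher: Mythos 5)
Your proof is correct and follows the same strategy as the paper's: the paper argues by the three-region split $K_m$, $K_{m+1}\setminus K_m$, $M\setminus K_{m+1}$, which is equivalent to your convex-combination identity $\tilde\phi-\psi=\chi_m(\phi-\psi)+(1-\chi_m)(s\circ\psi-\psi)$. You make the uniformity in $m$ explicit via the transition-index bound $Q(x)$ and the chain-rule factor $F_k(|\psi|_{k,x})$, where the paper's version absorbs the constants $C_m$ (from the cutoffs on $K_{m+1}\setminus K_m$) by simply asserting that the claim holds if $\delta$ decreases rapidly enough; so yours is a more careful writeup of the same argument.
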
  
\begin{proof}
We check different parts of $M$.   Since on $K_j$ we have that 
$\tilde\phi=\phi$ it suffices that $\delta(x)<\epsilon(x)$ for all $x\in M$.
On $M\setminus K_{m+1}$ we have that $\tilde\phi(x)=s(\psi(x))$, 
and given any $\tilde\epsilon<\epsilon$, 
it is clear that if $\delta$ decreases rapidly towards zero, then 
$|\psi-s\circ\psi|<\tilde\epsilon(x)$ for all $x\in M$.  
Finally there exist constants $C_m, m\in\mathbb N,$ such that 
we have 
$$
|\tilde\phi-\psi|_{k,x}\leq |\phi - \psi|_{k,x} + C_m|s(\psi(x)-\phi(x))|_{k,x}<\delta(x)+C_m\tilde\epsilon(x),
$$
for all $x\in K_{m+1}\setminus K_m$, and so it is clear that the claim holds 
if $\delta$ decreases rapidly as $x$ tends to infinity.  

\end{proof}

Theorem \ref{approximationaut} will be proved by an inductive argument where the main step 
will be covered by the following lemma.

\begin{lemma}\label{mainstepaut}
Let $\psi\in\Aut_{hol}\mathbb C^n$ such that $M=\psi(\mathbb R^s)$
is a sufficiently small $\mathcal C^1$-perturbation (in 
the sense of Carleman) of $\mathbb R^s\subset\mathbb C^n, s<n$, and let 
$K\subset\mathbb C^n$ be a compact set such that $K\cup M$ is holomorphically convex.  Assume 
given $R>0$ such that $K\subset R\mathbb B^n$ and a $\phi\in \mathrm{Diff}(M)$
such that $\phi$ is orientation preserving and $\phi=id$ near $K\cap M$. 
Then for any $k\in\mathbb N, \mu>0$ and strictly positive $\delta\in\mathcal C(M)$ there exist (arbitrarily large) $l\in\mathbb N$ and $\sigma\in Aut_{hol}\mathbb C^n$ such that the following hold
\begin{itemize}
\item[1)] $|\sigma-\phi|_{k,x}<\delta(x)$ for all $x\in \psi(\mathbb R^s\cap (l+1)\overline{\mathbb B^n})$, 
\item[2)] $\|\sigma-id\|<\mu$ near $K$, 
\item[3)] $R\mathbb B^n\subset \sigma\circ\psi(l\mathbb B^n)$, and
\item[4)] $\sigma(M)$ is a $\delta$-$\mathcal C^k$-small perturbation of $M$. 
\end{itemize}

\end{lemma}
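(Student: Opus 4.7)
The plan is to reduce the lemma to Theorem~\ref{FRCarleman} applied to an isotopy on $M$. The totally real submanifold $\mathbb R^s\subset\mathbb C^n$ (with $s<n$) has the nice projection property: after first translating away from the origin so that $A_4$ holds, the projection along $e_n$ satisfies $A_1$--$A_4$. Since $M=\psi(\mathbb R^s)$ is a sufficiently small $\mathcal C^1$-perturbation of $\mathbb R^s$, $M$ inherits the nice projection property (via the composition of the translation with $\psi^{-1}$), and Theorem~\ref{FRCarleman} is therefore available for $M$.

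First I would construct a smooth isotopy $\phi_t:M\to M$ with $\phi_0=\mathrm{id}$, $\phi_1=\phi$, and $\phi_t=\mathrm{id}$ on a fixed open neighborhood $V$ of $K\cap M$ in $M$, for every $t$; this uses that orientation-preserving diffeomorphisms of $\mathbb R^s$ are smoothly isotopic to the identity relative to any open set where they already equal the identity. Next pick $l$ so large that $R\overline{\mathbb B^n}\Subset\psi(l\mathbb B^n)$, and then $L\gg l+1$ so large that $\phi_t(\psi((l+1)\overline{\mathbb B^n})\cap M)\Subset\psi(L\mathbb B^n)\cap M$ for all $t$. Multiplying the generating vector field $X_t=\partial_t\phi_t\circ\phi_t^{-1}$ by a smooth cutoff on $M$ supported in $\psi(L\mathbb B^n)\cap M$ and equal to $1$ on $\psi((l+1)\overline{\mathbb B^n})\cap M$ and re-integrating produces a compactly supported isotopy $\tilde\phi_t$ of $M$ that agrees with $\phi_t$ on $\psi((l+1)\overline{\mathbb B^n})\cap M$ and is the identity off $\psi(L\overline{\mathbb B^n})\cap M$. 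Extend $\tilde\phi_t$ to a small open neighborhood $\Omega\subset V$ of $K$ by $\tilde\phi_t|_\Omega=\mathrm{id}$, and arrange polynomial convexity of $\tilde\phi_t(K\cup M)$ for every $t$ by combining the given holomorphic convexity of $K\cup M$ with a small further perturbation of $\tilde\phi_t$ on $M\setminus K$ via Proposition~\ref{fixingisotopy} (applied to an appropriate compact exhaustion), keeping the endpoints intact.

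Applying Theorem~\ref{FRCarleman} with a tolerance $\delta'\in\mathcal C(K\cup M)$ pointwise smaller than $\delta$ on $M$ and smaller than $\mu$ near $K$, we obtain $\sigma\in\mathrm{Aut}_{hol}\mathbb C^n$ with $|\sigma-\tilde\phi_1|_{k,x}<\delta'(x)$ for all $x\in K\cup M$. Conditions (1), (2), and (4) then follow at once: on $\psi(\mathbb R^s\cap(l+1)\overline{\mathbb B^n})$ we have $\tilde\phi_1=\phi$, giving (1); on a neighborhood of $K$ we have $\tilde\phi_1=\mathrm{id}$, so the uniform part of the approximation gives (2); and the $\mathcal C^k$-Carleman closeness of $\sigma$ to $\tilde\phi_1$ along $M$, combined with $\tilde\phi_1(M)=M$, gives (4).

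The main obstacle is condition (3), that $R\mathbb B^n\subset\sigma\circ\psi(l\mathbb B^n)$. Since $\sigma\circ\psi$ is an injective holomorphic map on $l\mathbb B^n$, its image is the bounded component of $\mathbb C^n\setminus\sigma(\psi(\partial l\mathbb B^n))$ (Jordan--Brouwer in $\mathbb R^{2n}$), so (3) reduces to the disjointness $\sigma(\psi(\partial l\mathbb B^n))\cap R\overline{\mathbb B^n}=\emptyset$ together with the trivial point-inclusion that some $\sigma\circ\psi(x_0)$ lies in $R\mathbb B^n$, which follows from $\sigma$ being close to $\mathrm{id}$ near some point of $K\subset R\mathbb B^n$. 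The difficulty is that Theorem~\ref{FRCarleman} directly controls $\sigma$ only on $K\cup M$, while $\psi(\partial l\mathbb B^n)$ can be far off $M$. I would address this by tracking the explicit Anders\'en--Lempert shear decomposition of $\sigma$ from the proof of Theorem~\ref{FRCarleman}: each constituent shear or overshear is governed by an entire function $\tau_k$ obtained by Carleman approximation on a polynomially convex subset of $\mathbb C^{n-1}$, and by forcing its tolerance to be very small also on a large fixed compact of $\mathbb C^{n-1}$ containing $\pi_{v_\epsilon}(\psi(\partial l\mathbb B^n))$, each shear displaces $\psi(\partial l\mathbb B^n)$ only slightly. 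Composed, the total displacement can then be made smaller than the positive distance from $\psi(\partial l\mathbb B^n)$ to $R\overline{\mathbb B^n}$, yielding (3). Bookkeeping this global displacement across the shear composition while preserving the pointwise approximation on $M$ is the most delicate step.
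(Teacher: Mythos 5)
Your reduction to Theorem~\ref{FRCarleman} via a compactly supported isotopy is the right first step, and conditions (1), (2), (4) do follow as you describe. The genuine gap is condition (3), and you have correctly identified it as the crux, but your proposed fix does not work. Theorem~\ref{FRCarleman} controls $\sigma$ only along the closed set $K\cup M$, and the sphere $\psi(\partial l\mathbb B^n)$ is a large compact set which, except along the $(s-1)$-sphere $\psi(\mathbb R^s\cap\partial l\mathbb B^n)$, lies entirely off $K\cup M$. In the shear decomposition, each $\tilde\tau_k$ is obtained by Carleman approximation on the closed set $\pi_k(\tilde\Theta(k-1)(K\cup M))$, and the resulting entire function is completely unconstrained on the rest of $\mathbb C^{n-1}$; you cannot simply ``force its tolerance to be very small also on a large fixed compact'' containing $\pi_{v_\epsilon}(\psi(\partial l\mathbb B^n))$, because that compact set is not part of the approximation data. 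To graft it in you would have to prescribe boundary values for $\tilde\tau_k$ on all of that compact (not just where it meets $\pi_k(M)$) and re-verify the relevant hull conditions for the enlarged closed set at every stage $k$ of the inductive shear composition; that amounts to reproving a stronger version of Theorem~\ref{FRCarleman}, which your proposal does not do.

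The paper sidesteps this entirely with a two-step construction: first produce $\Phi_1\in\mathrm{Aut}_{hol}\mathbb C^n$ approximating the cut-off isotopy on $K\cup M$ (with no control whatsoever on any large sphere), and \emph{then} choose $l$ so large that $R\mathbb B^n\subset\Phi_1\circ\psi(l\mathbb B^n)$ holds trivially, since $\Phi_1\circ\psi$ is an automorphism and $\Phi_1\circ\psi(l\mathbb B^n)$ exhausts $\mathbb C^n$ as $l\to\infty$. One then constructs a correcting diffeomorphism $\tilde\sigma$ of $M_1=\Phi_1(M)$ (essentially $\phi\circ\Phi_1^{-1}$, perturbed to be the identity on $\Phi_1(C)$ and extended by the identity on $(R+1)\overline{\mathbb B^n}$, which contains $h(R\overline{\mathbb B^n}\cup M_1)$), and a second application of Theorem~\ref{FRCarleman} gives $\Phi_2$ approximating $\tilde\sigma$ and close to the identity on $R\overline{\mathbb B^n}$. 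Because $\Phi_2$ is near-identity on $R\overline{\mathbb B^n}$ and $l$ was chosen to give a bit of slack, $\sigma=\Phi_2\circ\Phi_1$ inherits (3). This order of quantifiers --- automorphism first, then $l$ --- is the key structural idea your single-pass argument is missing.
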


\begin{proof}

Note that by \cite{LW} we have that if $M$ is a sufficiently small $\mathcal C^1$-pertur\-bation 
of $\mathbb R^s$ then $h(R\overline{\mathbb B^n}\cup M)\subset (R+1)\mathbb B^n$
for any $R>0$.

Choose a compact set $C\subset M$ such that $\phi(M\setminus C)\subset M\setminus (R+1)\mathbb B^n$.
Let $X(t,x), t\in [0,1]$ be a non autonomous smooth vector field such that 
$\phi$ is the time one map of $X$, and such that $X(t,x)=0$ on $M\cap K$. Denote this flow by $\phi_t$.      Choose a compact 
set $C'$ such that $C'$ contains the complete $\phi_t$-orbit of $C$.   Choose a smooth 
cutoff function $\chi\in\mathcal C^\infty_0(M)$ such that $\chi\equiv 1$ near $C'$.  
Define $\tilde X(t,x):=\chi(x)\cdot X(t,x)$, and let $\tilde \phi_t$ denote the 
flow of $\tilde X$.      By Theorem \ref{FRCarleman}
there exists $\Phi_1\in Aut_{hol}\mathbb C^n$ such that 
$\Phi_1$ approximates $\tilde \phi_1$ on $M$ in the sense of Carlemann, hence
also $\phi$ on $C$, and 
$\Phi_1$ approximates the identity near $K$.   We set $M_1=\Phi_1(M)$.

Now choose $l>>0$ such that $R\mathbb B^n\subset\Phi_1\circ\psi(l\mathbb B^n)$.
Let $\tilde\sigma\in\mathrm{Diff}(M_1)$ be defined by $\tilde\sigma:=\pi_1\circ\phi\circ\Phi_1^{-1}$.  
Note that $\tilde\sigma$ is close to the identity on $\Phi_1(C)$, so 
after a small perturbation we may assume that $\sigma$ is the identity on $\Phi_1(C)$
and furthermore $\sigma$ can then be extended to the identity on $(R+1)\mathbb B^n$
which contains $h(R\cdot\overline{\mathbb B^n}\cup\Phi_1(M))$.

By an argument similar to that above there exists $\Phi_2\in Aut_{hol}\mathbb C^n$ that 
approximates $\tilde\sigma$ on $\Phi_1\circ\psi(M\cap (r+1)\cdot\overline{\mathbb B^n})$
and it is near the identity on $R\mathbb B^n$. Now the composition 
$\psi:=\Phi_2\circ\Phi_1$ furnishes a desired map.  

\end{proof}

\emph{Proof of Theorem \ref{approximationaut}:}
After possibly having to compose with the map $(z_1,z_2...,z_n)\mapsto (-z_1,z_2,...,z_n)$
we may assume that $\phi$ is orientation preserving, and we may also assume that $\phi(0)=0$.

For $i=0,1,2,...$, we will inductively construct sequences of automorphisms $\psi_i \in  Aut_{hol}\mathbb C^n$, real numbers $R_i\leq r_i$, 
$R_i\rightarrow\infty$ as $i\rightarrow\infty$, and diffeomorphisms $\phi_i\in\mathrm{Diff}(\psi_i(\mathbb R^k))$
such that the following hold for $i\geq 1$:

\begin{itemize}
\item[$(1_i)$] $|\psi_i-\phi|_{k,x}< \frac 1 2 \epsilon(x)$ for all $x\in \R^s  \cap r_{i} \overline{\B^n}$
\item[$(2_i)$] $|\phi_i\circ\psi_i -\phi|_{k,x}< \frac 1 2 \epsilon(x)$ for all $x\in \R^s$,
\item[$(3_i)$] $\|\psi_i-\psi_{i-1}\|_{\psi_{i-1}(r_{i-1}\overline{\mathbb B^n})} < (\frac{1} {2})^i$,   
\item[$(4_i)$] $R_i\mathbb B^n\subset\psi_i(r_i\mathbb B^n)$, and
\item[$(5_i)$] $\phi_i=\mathrm{id}$ near $\psi_i(\mathbb R^s\cap r_i\overline{\mathbb B^n})$.
\end{itemize}

In addition, each $\psi_i(\mathbb R^s)$ will be a sufficiently small perturbation of $\mathbb R^s$
such that Lemma \ref{mainstepaut} applies.  \

If we set $r_0=r_1=R_0=R_1=0$, $\psi_0=\psi_1=\phi_0=\mathrm{id}$ we 
get $(1_1)-(4_1)$, and we perturb $\phi$ slightly near the origin to get 
a $\phi_1$ such that $(5_1)$ also holds.   \

To complete the induction step we now assume that $(1_i)-(5_i)$ hold for some $i\geq 1$.  
Choose $R_{i+1}\geq R_i+1$ such that $\psi_i(r_i\mathbb B^n)\subset R_{i+1}\mathbb B^n$.
For any strictly positive $\delta\in\mathcal C(\psi_i(\mathbb R^s))$ there exists
by Lemma \ref{mainstepaut} a $r_{i+1}>R_{i+1}$ and a  $\sigma\in\Aut_{hol}\mathbb C^n$ approximating $\phi_i$ $\delta$-well on $\psi_i(\mathbb R^s\cap (r_{m+1}+1)\mathbb B^n)$, 
and so that setting $\psi_{i+1}:=\sigma\circ\psi_i$ we get
$(1_{i+1}), (3_{i+1})$ and $(4_{i+1})$.   Using Lemma \ref{glue}
we may also achieve that we get a map $\tilde\phi_i:\psi_{i}(\mathbb R^s)\rightarrow\psi_{i+1}(\mathbb R^s)$  such that setting $\phi_{i+1}:=\tilde\phi_m\circ\sigma^{-1}$
gives us $(2_{i+1})$ and $(5_{i+1})$.

\medskip

It now follows from $(3_i)$ that the sequence $\Psi:=\lim_{j\rightarrow\infty}\psi_i$
converges uniformly on $\mathbb C^n$, and we may assume that 
the limit is injective holomorphic.   Moreover it follows from 
$(4_i)$ that the sequence $\psi_i^{-1}$ also converges on $\mathbb C^n$, 
hence $\Psi\in\Aut_{hol}\mathbb C^n$.  By $(1_i)$ we have 
that $\Psi$ is a good enough approximation on $\mathbb R^s$.

$\hfill\square$

\medskip

\bibliographystyle{amsplain}

\begin{thebibliography}{10}

\bibitem{AM}
Abraham, R. and Marsden, J. E.; 
Foundations of mechanics. Second edition, revised and enlarged. With the assistance of Tudor Ratiu and Richard Cushman. Benjamin/Cummings Publishing Co., Inc., Advanced Book Program, Reading, Mass., 1978.


\bibitem{Alexander}
Alexander, H.; 
A Carleman theorem for curves in $\mathbb C^n$. 
\textit{Math. Scand.} {\bf 45} (1979), no. 1, 70--76.


\bibitem{AndersenLempert1992}
Anders\'{e}n, E. and Lempert, L.; On the group of holomorphic automorphisms of $\mathbb C^n$.
\textit{Invent. Math.} {\bf 110} (1992), 371--388. 

\bibitem{Forstnericbook}
Forstneri\v{c}, F.;
Stein manifolds and holomorphic mappings. 
The homotopy principle in complex analysis. Ergebnisse der Mathematik und ihrer Grenzgebiete. 3. Folge. 56.  Springer, Heidelberg, 2011. 


\bibitem{Forstneric1994}
Forstneri\v{c}, F.;
Approximation by automorphisms on smooth submanifolds of $\mathbb C^n$. 
\textit{Math. Ann.} \bf 300\rm, (1994), no. 4, 719--738.


\bibitem{ForstnericWold2013}
Forstneri\v{c}, F. and Wold, E. F.; Fatou-Bieberbach domains in $\mathbb C^n\setminus\mathbb R^k$.  Preprint 2013. 


\bibitem{FLO}
Forstneri\v{c}, F., L\o w, E. and \O vrelid, N;
Solving the $d$- and $\overline\partial$-equations in thin tubes and applications to mappings. Michigan Math. J. \bf 49\rm, (2001), no. 2, 369--416.


\bibitem{FR}
Forstneri\v{c}, F. and Rosay, J.-P.; 
Approximation of biholomorphic mappings by automorphisms of $\mathbb C^n$. 
\textit{Invent. Math.} \bf 112\rm, (1993), no. 2, 323--349.

\bibitem{GauthierZeron}
Gauthier, P. M. and Zeron, E. S.;
Approximation on arcs and dendrites going to infinity in $\mathbb C^n$. 
\textit{Canad. Math. Bull.} {\bf 45} (2002), no. 1, 80--85.

\bibitem{Hoischen}
Hoischen, A; Eine Versch\"{a}rfung eines Approximationssatzes von Carleman.
\textit{J. Approximation Theory} {\bf 9}  (1973), 272--277.

\bibitem{Hu}
Husemoller, D.; Fibre bundles.
Third edition. Graduate Texts in Mathematics, 20. Springer-Verlag, New York, 1994.

\bibitem{Kutzschebauch2005}
Kutzschebauch, F.; 
Anders\'{e}n-Lempert-theory with parameters: a representation theoretic point of view. 
\textit{J. Algebra Appl.} \bf 4\rm,  (2005), no. 3, 325--340.

\bibitem{KL} Kutzschebauch, F., Lodin, S.;  Holomorphic families of nonequivalent embeddings and of holomorphic group actions on affine space
Duke Math. J.  162, no. 1, (2013), 49--94.

\bibitem{LW}
L\o w, E. and Wold, E. F.; Polynomial Convexity and Totally real manifolds.
\textit{Complex Var. Elliptic Equ.} \bf 54\rm,  (2009),  no. 3-4, 265--281. 

\bibitem{M}
Manne, P. E.; Carleman Approximation by Entire Functions on Globally Defined 
Totally Real Subset of $\mathbb C^n$, Doctor Scientarium Thesis, University of Oslo, 
1993. 

\bibitem{MWO}
Manne, P. E., Wold, E. F. and \O vrelid, N; Carleman approximation by entire functions 
on Stein manifolds.  Math. Ann. {\bf 351} (2011), no. 3, 571--585.

\bibitem{Scheinberg}
Scheinberg, S.; 
Uniform approximation by entire functions. 
\textit{J. Analyse Math.} {\bf 29}  (1976), 16--18.




\end{thebibliography}

\end{document}